
\documentclass[a4paper,oneside,12pt]{article}
\usepackage{amsfonts}
\usepackage{amsmath}
\usepackage{amssymb}
\usepackage{amsthm}
\usepackage{MnSymbol}
\usepackage{enumerate}
\usepackage{bm}
\usepackage[margin = 3cm]{geometry}
\usepackage{hyperref,url}
\usepackage{enumitem}
\usepackage[notref,notcite]{showkeys}
\usepackage{pstricks}
\usepackage{accents}
\usepackage{graphicx}
\usepackage{overpic}
\usepackage{subfigure}
\usepackage{soul}

\setcounter{MaxMatrixCols}{10}

\newtheorem{theorem}{Theorem}

\newtheorem{axiom}[theorem]{Axiom}

\newtheorem{conjecture}[theorem]{Conjecture}
\newtheorem{corollary}[theorem]{Corollary}

\newtheorem{definition}[theorem]{Definition}
\newtheorem{example}[theorem]{Example}
\newtheorem{exercise}[theorem]{Exercise}
\newtheorem{lemma}[theorem]{Lemma}

\newtheorem{proposition}[theorem]{Proposition}
\newtheorem{remark}[theorem]{Remark}

\typeout{TCILATEX Macros for Scientific Word 2.5 <22 Dec 95>.}
\typeout{NOTICE:  This macro file is NOT proprietary and may be 
freely copied and distributed.}
\makeatletter
%
\newcount\@hour\newcount\@minute\chardef\@x10\chardef\@xv60
\def\tcitime{
\def\@time{%
  \@minute\time\@hour\@minute\divide\@hour\@xv
  \ifnum\@hour<\@x 0\fi\the\@hour:%
  \multiply\@hour\@xv\advance\@minute-\@hour
  \ifnum\@minute<\@x 0\fi\the\@minute
  }}%

\@ifundefined{hyperref}{}{}

\@ifundefined{qExtProgCall}{\def\qExtProgCall#1#2#3#4#5#6{\relax}}{}
%
%
%
%
\def\QCTOpt[#1]#2{%
  \def\QCTOptB{#1}
  \def\QCTOptA{#2}
}
\def\QCTNOpt#1{%
  \def\QCTOptA{#1}
  \let\QCTOptB\empty
}
\def\Qct{%
  \@ifnextchar[{%
    \QCTOpt}{\QCTNOpt}
}
\def\QCBOpt[#1]#2{%
  \def\QCBOptB{#1}
  \def\QCBOptA{#2}
}
\def\QCBNOpt#1{%
  \def\QCBOptA{#1}
  \let\QCBOptB\empty
}
\def\Qcb{%
  \@ifnextchar[{%
    \QCBOpt}{\QCBNOpt}
}
\def\PrepCapArgs{%
  \ifx\QCBOptA\empty
    \ifx\QCTOptA\empty
      {}%
    \else
      \ifx\QCTOptB\empty
        {\QCTOptA}%
      \else
        [\QCTOptB]{\QCTOptA}%
      \fi
    \fi
  \else
    \ifx\QCBOptA\empty
      {}%
    \else
      \ifx\QCBOptB\empty
        {\QCBOptA}%
      \else
        [\QCBOptB]{\QCBOptA}%
      \fi
    \fi
  \fi
}
\newcount\GRAPHICSTYPE
\GRAPHICSTYPE=\z@
\def\GRAPHICSPS#1{%
 \ifcase\GRAPHICSTYPE
   \special{ps: #1}%
 \or
   \special{language "PS", include "#1"}%
 \fi
}%
%
%
%
\def\graffile#1#2#3#4{%
    \leavevmode
    \raise -#4 \BOXTHEFRAME{%
        \hbox to #2{\raise #3\hbox to #2{\null #1\hfil}}}%
}%
%
\def\draftbox#1#2#3#4{%
 \leavevmode\raise -#4 \hbox{%
  \frame{\rlap{\protect\tiny #1}\hbox to #2%
   {\vrule height#3 width\z@ depth\z@\hfil}%
  }%
 }%
}%
\newcount\draft
\draft=\z@

\newif\ifwasdraft
\wasdraftfalse

\def\GRAPHIC#1#2#3#4#5{%
 \ifnum\draft=\@ne\draftbox{#2}{#3}{#4}{#5}%
  \else\graffile{#1}{#3}{#4}{#5}%
  \fi
 }%
\def\addtoLaTeXparams#1{%
    \edef\LaTeXparams{\LaTeXparams #1}}%
%

\newif\ifBoxFrame \BoxFramefalse
\newif\ifOverFrame \OverFramefalse
\newif\ifUnderFrame \UnderFramefalse

\def\BOXTHEFRAME#1{%
   \hbox{%
      \ifBoxFrame
         \frame{#1}%
      \else
         {#1}%
      \fi
   }%
}

\def\doFRAMEparams#1{\BoxFramefalse\OverFramefalse\UnderFramefalse\readFRAMEparams#1\end}%
\def\readFRAMEparams#1{%
 \ifx#1\end%
  \let\next=\relax
  \else
  \ifx#1i\dispkind=\z@\fi
  \ifx#1d\dispkind=\@ne\fi
  \ifx#1f\dispkind=\tw@\fi
  \ifx#1t\addtoLaTeXparams{t}\fi
  \ifx#1b\addtoLaTeXparams{b}\fi
  \ifx#1p\addtoLaTeXparams{p}\fi
  \ifx#1h\addtoLaTeXparams{h}\fi
  \ifx#1X\BoxFrametrue\fi
  \ifx#1O\OverFrametrue\fi
  \ifx#1U\UnderFrametrue\fi
  \ifx#1w
    \ifnum\draft=1\wasdrafttrue\else\wasdraftfalse\fi
    \draft=\@ne
  \fi
  \let\next=\readFRAMEparams
  \fi
 \next
 }%
%

\def\IFRAME#1#2#3#4#5#6{%
      \bgroup
      \let\QCTOptA\empty
      \let\QCTOptB\empty
      \let\QCBOptA\empty
      \let\QCBOptB\empty
      #6%
      \parindent=0pt%
      \leftskip=0pt
      \rightskip=0pt
      \setbox0 = \hbox{\QCBOptA}%
      \@tempdima = #1\relax
      \ifOverFrame
          \typeout{This is not implemented yet}%
          \show\HELP
      \else
         \ifdim\wd0>\@tempdima
            \advance\@tempdima by \@tempdima
            \ifdim\wd0 >\@tempdima
               \textwidth=\@tempdima
               \setbox1 =\vbox{%
                  \noindent\hbox to \@tempdima{\hfill\GRAPHIC{#5}{#4}{#1}{#2}{#3}\hfill}\\%
                  \noindent\hbox to \@tempdima{\parbox[b]{\@tempdima}{\QCBOptA}}%
               }%
               \wd1=\@tempdima
            \else
               \textwidth=\wd0
               \setbox1 =\vbox{%
                 \noindent\hbox to \wd0{\hfill\GRAPHIC{#5}{#4}{#1}{#2}{#3}\hfill}\\%
                 \noindent\hbox{\QCBOptA}%
               }%
               \wd1=\wd0
            \fi
         \else
            \ifdim\wd0>0pt
              \hsize=\@tempdima
              \setbox1 =\vbox{%
                \unskip\GRAPHIC{#5}{#4}{#1}{#2}{0pt}%
                \break
                \unskip\hbox to \@tempdima{\hfill \QCBOptA\hfill}%
              }%
              \wd1=\@tempdima
           \else
              \hsize=\@tempdima
              \setbox1 =\vbox{%
                \unskip\GRAPHIC{#5}{#4}{#1}{#2}{0pt}%
              }%
              \wd1=\@tempdima
           \fi
         \fi
         \@tempdimb=\ht1
         \advance\@tempdimb by \dp1
         \advance\@tempdimb by -#2%
         \advance\@tempdimb by #3%
         \leavevmode
         \raise -\@tempdimb \hbox{\box1}%
      \fi
      \egroup%
}%
%
\def\DFRAME#1#2#3#4#5{%
 \begin{center}
     \let\QCTOptA\empty
     \let\QCTOptB\empty
     \let\QCBOptA\empty
     \let\QCBOptB\empty
     \ifOverFrame 
        #5\QCTOptA\par
     \fi
     \GRAPHIC{#4}{#3}{#1}{#2}{\z@}
     \ifUnderFrame 
        \nobreak\par #5\QCBOptA
     \fi
 \end{center}%
 }%
%
\def\FFRAME#1#2#3#4#5#6#7{%
 \begin{figure}[#1]%
  \let\QCTOptA\empty
  \let\QCTOptB\empty
  \let\QCBOptA\empty
  \let\QCBOptB\empty
  \ifOverFrame
    #4
    \ifx\QCTOptA\empty
    \else
      \ifx\QCTOptB\empty
        \caption{\QCTOptA}%
      \else
        \caption[\QCTOptB]{\QCTOptA}%
      \fi
    \fi
    \ifUnderFrame\else
      \label{#5}%
    \fi
  \else
    \UnderFrametrue%
  \fi
  \begin{center}\GRAPHIC{#7}{#6}{#2}{#3}{\z@}\end{center}%
  \ifUnderFrame
    #4
    \ifx\QCBOptA\empty
      \caption{}%
    \else
      \ifx\QCBOptB\empty
        \caption{\QCBOptA}%
      \else
        \caption[\QCBOptB]{\QCBOptA}%
      \fi
    \fi
    \label{#5}%
  \fi
  \end{figure}%
 }%
%
%
%
%
%
\newcount\dispkind%

\def\makeactives{
  \catcode`\"=\active
  \catcode`\;=\active
  \catcode`\:=\active
  \catcode`\'=\active
  \catcode`\~=\active
}
\bgroup
   \makeactives
   \gdef\activesoff{%
      \def"{\string"}
      \def;{\string;}
      \def:{\string:}
      \def'{\string'}
      \def~{\string~}
    }
\egroup

\def\FRAME#1#2#3#4#5#6#7#8{%
 \bgroup
 \@ifundefined{bbl@deactivate}{}{\activesoff}
 \ifnum\draft=\@ne
   \wasdrafttrue
 \else
   \wasdraftfalse%
 \fi
 \def\LaTeXparams{}%
 \dispkind=\z@
 \def\LaTeXparams{}%
 \doFRAMEparams{#1}%
 \ifnum\dispkind=\z@\IFRAME{#2}{#3}{#4}{#7}{#8}{#5}\else
  \ifnum\dispkind=\@ne\DFRAME{#2}{#3}{#7}{#8}{#5}\else
   \ifnum\dispkind=\tw@
    \edef\@tempa{\noexpand\FFRAME{\LaTeXparams}}%
    \@tempa{#2}{#3}{#5}{#6}{#7}{#8}%
    \fi
   \fi
  \fi
  \ifwasdraft\draft=1\else\draft=0\fi{}%
  \egroup
 }%
%

\def\TEXUX#1{"texux"}

%
%
%
%
%
%
%
\def\func#1{\mathop{\rm #1}}%
%

%
\long\def\QQQ#1#2{%
     \long\expandafter\def\csname#1\endcsname{#2}}%
\@ifundefined{QTP}{\def\QTP#1{}}{}
\@ifundefined{QEXCLUDE}{\def\QEXCLUDE#1{}}{}
\@ifundefined{Qlb}{}{}
\@ifundefined{Qlt}{}{}
\long\def\QQA#1#2{}%
\def\QTR#1#2{{\csname#1\endcsname #2}}
\def\EXPAND#1[#2]#3{}%
\def\NOEXPAND#1[#2]#3{}%
\def\LaTeXparent#1{}%
\def\ChildStyles#1{}%
\def\ChildDefaults#1{}%
\def\QTagDef#1#2#3{}%
%
\@ifundefined{StyleEditBeginDoc}{}{}
%
\def\QQfnmark#1{\footnotemark}

%
\def\makeatletter\input gnuindex.sty\makeatother\makeindex{\makeatletter\input gnuindex.sty\makeatother\makeindex}%
\@ifundefined{INDEX}{\def\INDEX#1#2{}{}}{}%
\@ifundefined{SUBINDEX}{\def\SUBINDEX#1#2#3{}{}{}}{}%
\@ifundefined{initial}%
   {\def\initial#1{\bigbreak{\raggedright\large\bf #1}\kern 2\p@\penalty3000}}%
   {}%
\@ifundefined{entry}{}{}%
\@ifundefined{primary}{}{}%
\@ifundefined{secondary}{}{}%
\@ifundefined{ZZZ}{}{\makeatletter\input gnuindex.sty\makeatother\makeindex\makeatletter}%
%
\@ifundefined{abstract}{%
 \def\abstract{%
  \if@twocolumn
   \section*{Abstract (Not appropriate in this style!)}%
   \else \small 
   \begin{center}{\bf Abstract\vspace{-.5em}\vspace{\z@}}\end{center}%
   \quotation 
   \fi
  }%
 }{%
 }%
\@ifundefined{endabstract}{\def\endabstract
  {\if@twocolumn\else\endquotation\fi}}{}%
\@ifundefined{maketitle}{\def\maketitle#1{}}{}%
\@ifundefined{affiliation}{\def\affiliation#1{}}{}%
\@ifundefined{proof}{}{}%
\@ifundefined{endproof}{}{}%
\@ifundefined{newfield}{\def\newfield#1#2{}}{}%
\@ifundefined{chapter}{\def\chapter#1{\par(Chapter head:)#1\par }%
 \newcount\c@chapter}{}%
\@ifundefined{part}{\def\part#1{\par(Part head:)#1\par }}{}%
\@ifundefined{section}{\def\section#1{\par(Section head:)#1\par }}{}%
\@ifundefined{subsection}{\def\subsection#1%
 {\par(Subsection head:)#1\par }}{}%
\@ifundefined{subsubsection}{\def\subsubsection#1%
 {\par(Subsubsection head:)#1\par }}{}%
\@ifundefined{paragraph}{\def\paragraph#1%
 {\par(Subsubsubsection head:)#1\par }}{}%
\@ifundefined{subparagraph}{\def\subparagraph#1%
 {\par(Subsubsubsubsection head:)#1\par }}{}%
\@ifundefined{therefore}{}{}%
\@ifundefined{backepsilon}{}{}%
\@ifundefined{yen}{}{}%
\@ifundefined{registered}{%
   \def\registered{\relax\ifmmode{}\r@gistered
                    \else$\m@th\r@gistered$\fi}%
 \def\r@gistered{^{\ooalign
  {\hfil\raise.07ex\hbox{$\scriptstyle\rm\text{R}$}\hfil\crcr
  \mathhexbox20D}}}}{}%
\@ifundefined{Eth}{}{}%
\@ifundefined{eth}{}{}%
\@ifundefined{Thorn}{}{}%
\@ifundefined{thorn}{}{}%
%
\@ifundefined{degree}{}{}%
%
\newdimen\theight
\def\Column{%
 \vadjust{\setbox\z@=\hbox{\scriptsize\quad\quad tcol}%
  \theight=\ht\z@\advance\theight by \dp\z@\advance\theight by \lineskip
  \kern -\theight \vbox to \theight{%
   \rightline{\rlap{\box\z@}}%
   \vss
   }%
  }%
 }%
\def\qed{%
 \ifhmode\unskip\nobreak\fi\ifmmode\ifinner\else\hskip5\p@\fi\fi
 \hbox{\hskip5\p@\vrule width4\p@ height6\p@ depth1.5\p@\hskip\p@}%
 }%
\def\miss{\hbox{\vrule height2\p@ width 2\p@ depth\z@}}%
%
%
\def\tcol#1{{\baselineskip=6\p@ \vcenter{#1}} \Column}  %
%
%
%
%
%

\def\newfmtname{LaTeX2e}
\def\chkcompat{%
   \if@compatibility
   \else
     \usepackage{latexsym}
   \fi
}

\ifx\fmtname\newfmtname
  \DeclareOldFontCommand{\rm}{\normalfont\rmfamily}{\mathrm}
  \DeclareOldFontCommand{\sf}{\normalfont\sffamily}{\mathsf}
  \DeclareOldFontCommand{\tt}{\normalfont\ttfamily}{\mathtt}
  \DeclareOldFontCommand{\bf}{\normalfont\bfseries}{\mathbf}
  \DeclareOldFontCommand{\it}{\normalfont\itshape}{\mathit}
  \DeclareOldFontCommand{\sl}{\normalfont\slshape}{\@nomath\sl}
  \DeclareOldFontCommand{\sc}{\normalfont\scshape}{\@nomath\sc}
  \chkcompat
\fi

%

\def\alpha{\Greekmath 010B }%
\def\beta{\Greekmath 010C }%
\def\gamma{\Greekmath 010D }%
\def\delta{\Greekmath 010E }%
\def\epsilon{\Greekmath 010F }%
\def\zeta{\Greekmath 0110 }%
\def\eta{\Greekmath 0111 }%
\def\theta{\Greekmath 0112 }%
\def\iota{\Greekmath 0113 }%
\def\kappa{\Greekmath 0114 }%
\def\lambda{\Greekmath 0115 }%
\def\mu{\Greekmath 0116 }%
\def\nu{\Greekmath 0117 }%
\def\xi{\Greekmath 0118 }%
\def\pi{\Greekmath 0119 }%
\def\rho{\Greekmath 011A }%
\def\sigma{\Greekmath 011B }%
\def\tau{\Greekmath 011C }%
\def\upsilon{\Greekmath 011D }%
\def\phi{\Greekmath 011E }%
\def\chi{\Greekmath 011F }%
\def\psi{\Greekmath 0120 }%
\def\omega{\Greekmath 0121 }%
\def\varepsilon{\Greekmath 0122 }%
\def\vartheta{\Greekmath 0123 }%
\def\varpi{\Greekmath 0124 }%
\def\varrho{\Greekmath 0125 }%
\def\varsigma{\Greekmath 0126 }%
\def\varphi{\Greekmath 0127 }%

\def\nabla{\Greekmath 0272 }
\def\FindBoldGroup{%
   {\setbox0=\hbox{$\mathbf{x\global\edef\theboldgroup{\the\mathgroup}}$}}%
}

\def\Greekmath#1#2#3#4{%
    \if@compatibility
        \ifnum\mathgroup=\symbold
           \mathchoice{\mbox{\boldmath$\displaystyle\mathchar"#1#2#3#4$}}%
                      {\mbox{\boldmath$\textstyle\mathchar"#1#2#3#4$}}%
                      {\mbox{\boldmath$\scriptstyle\mathchar"#1#2#3#4$}}%
                      {\mbox{\boldmath$\scriptscriptstyle\mathchar"#1#2#3#4$}}%
        \else
           \mathchar"#1#2#3#4%
        \fi 
    \else 
        \FindBoldGroup
        \ifnum\mathgroup=\theboldgroup 
           \mathchoice{\mbox{\boldmath$\displaystyle\mathchar"#1#2#3#4$}}%
                      {\mbox{\boldmath$\textstyle\mathchar"#1#2#3#4$}}%
                      {\mbox{\boldmath$\scriptstyle\mathchar"#1#2#3#4$}}%
                      {\mbox{\boldmath$\scriptscriptstyle\mathchar"#1#2#3#4$}}%
        \else
           \mathchar"#1#2#3#4%
        \fi     	    
	  \fi}

\newif\ifGreekBold  \GreekBoldfalse
\let\SAVEPBF=\pbf
\def\pbf{\GreekBoldtrue\SAVEPBF}%

\@ifundefined{theorem}{\newtheorem{theorem}{Theorem}}{}
\@ifundefined{lemma}{\newtheorem{lemma}[theorem]{Lemma}}{}
\@ifundefined{corollary}{}{}
\@ifundefined{conjecture}{}{}
\@ifundefined{proposition}{}{}
\@ifundefined{axiom}{}{}
\@ifundefined{remark}{\newtheorem{remark}{Remark}}{}
\@ifundefined{example}{}{}
\@ifundefined{exercise}{}{}
\@ifundefined{definition}{}{}

\@ifundefined{mathletters}{%
  \newcounter{equationnumber}  
  \def\mathletters{%
     \addtocounter{equation}{1}
     \edef\@currentlabel{\theequation}%
     \setcounter{equationnumber}{\c@equation}
     \setcounter{equation}{0}%
     \edef\theequation{\@currentlabel\noexpand\alph{equation}}%
  }
  
}{}

\@ifundefined{BibTeX}{%
    \def\BibTeX{{\rm B\kern-.05em{\sc i\kern-.025em b}\kern-.08em
                 T\kern-.1667em\lower.7ex\hbox{E}\kern-.125emX}}}{}%
\@ifundefined{AmS}%
    {\def\AmS{{\protect\usefont{OMS}{cmsy}{m}{n}%
                A\kern-.1667em\lower.5ex\hbox{M}\kern-.125emS}}}{}%
\@ifundefined{AmSTeX}{}{}%
%

%
%
\ifx\ds@amstex\relax
   \message{amstex already loaded}\makeatother 
\else
   \@ifpackageloaded{amstex}%
      {\message{amstex already loaded}\makeatother }
      {}
   \@ifpackageloaded{amsgen}%
      {\message{amsgen already loaded}\makeatother }
      {}
\fi
%
%
%
%
\let\DOTSI\relax
\def\RIfM@{\relax\ifmmode}%
\def\FN@{\futurelet\next}%
\newcount\intno@
\def\iint{\DOTSI\intno@\tw@\FN@\ints@}%
\def\iiint{\DOTSI\intno@\thr@@\FN@\ints@}%
\def\iiiint{\DOTSI\intno@4 \FN@\ints@}%
\def\idotsint{\DOTSI\intno@\z@\FN@\ints@}%
\def\ints@{\findlimits@\ints@@}%
\newif\iflimtoken@
\newif\iflimits@
\def\findlimits@{\limtoken@true\ifx\next\limits\limits@true
 \else\ifx\next\nolimits\limits@false\else
 \limtoken@false\ifx\ilimits@\nolimits\limits@false\else
 \ifinner\limits@false\else\limits@true\fi\fi\fi\fi}%
\def\multint@{\int\ifnum\intno@=\z@\intdots@                          
 \else\intkern@\fi                                                    
 \ifnum\intno@>\tw@\int\intkern@\fi                                   
 \ifnum\intno@>\thr@@\int\intkern@\fi                                 
 \int}
\def\multintlimits@{\intop\ifnum\intno@=\z@\intdots@\else\intkern@\fi
 \ifnum\intno@>\tw@\intop\intkern@\fi
 \ifnum\intno@>\thr@@\intop\intkern@\fi\intop}%
\def\intic@{%
    \mathchoice{\hskip.5em}{\hskip.4em}{\hskip.4em}{\hskip.4em}}%
\def\negintic@{\mathchoice
 {\hskip-.5em}{\hskip-.4em}{\hskip-.4em}{\hskip-.4em}}%
\def\ints@@{\iflimtoken@                                              
 \def\ints@@@{\iflimits@\negintic@
   \mathop{\intic@\multintlimits@}\limits                             
  \else\multint@\nolimits\fi                                          
  \eat@}
 \else                                                                
 \def\ints@@@{\iflimits@\negintic@
  \mathop{\intic@\multintlimits@}\limits\else
  \multint@\nolimits\fi}\fi\ints@@@}%
\def\intkern@{\mathchoice{\!\!\!}{\!\!}{\!\!}{\!\!}}%
\def\plaincdots@{\mathinner{\cdotp\cdotp\cdotp}}%
\def\intdots@{\mathchoice{\plaincdots@}%
 {{\cdotp}\mkern1.5mu{\cdotp}\mkern1.5mu{\cdotp}}%
 {{\cdotp}\mkern1mu{\cdotp}\mkern1mu{\cdotp}}%
 {{\cdotp}\mkern1mu{\cdotp}\mkern1mu{\cdotp}}}%
%
%
%
\def\RIfM@{\relax\protect\ifmmode}
\def\text{\RIfM@\expandafter\text@\else\expandafter\mbox\fi}
\let\nfss@text\text
\def\text@#1{\mathchoice
   {\textdef@\displaystyle\f@size{#1}}%
   {\textdef@\textstyle\tf@size{\firstchoice@false #1}}%
   {\textdef@\textstyle\sf@size{\firstchoice@false #1}}%
   {\textdef@\textstyle \ssf@size{\firstchoice@false #1}}%
   \glb@settings}

\def\textdef@#1#2#3{\hbox{{%
                    \everymath{#1}%
                    \let\f@size#2\selectfont
                    #3}}}
\newif\iffirstchoice@
\firstchoice@true
%
%
%
%
%
\def\Let@{\relax\iffalse{\fi\let\\=\cr\iffalse}\fi}%
\def\vspace@{\def\vspace##1{\crcr\noalign{\vskip##1\relax}}}%
\def\multilimits@{\bgroup\vspace@\Let@
 \baselineskip\fontdimen10 \scriptfont\tw@
 \advance\baselineskip\fontdimen12 \scriptfont\tw@
 \lineskip\thr@@\fontdimen8 \scriptfont\thr@@
 \lineskiplimit\lineskip
 \vbox\bgroup\ialign\bgroup\hfil$\m@th\scriptstyle{##}$\hfil\crcr}%
\def\Sb{_\multilimits@}%
\def\endSb{\crcr\egroup\egroup\egroup}%
\def\Sp{^\multilimits@}%

%
%
%
\newdimen\ex@
\ex@.2326ex
\def\rightarrowfill@#1{$#1\m@th\mathord-\mkern-6mu\cleaders
 \hbox{$#1\mkern-2mu\mathord-\mkern-2mu$}\hfill
 \mkern-6mu\mathord\rightarrow$}%
\def\leftarrowfill@#1{$#1\m@th\mathord\leftarrow\mkern-6mu\cleaders
 \hbox{$#1\mkern-2mu\mathord-\mkern-2mu$}\hfill\mkern-6mu\mathord-$}%
\def\leftrightarrowfill@#1{$#1\m@th\mathord\leftarrow
\mkern-6mu\cleaders
 \hbox{$#1\mkern-2mu\mathord-\mkern-2mu$}\hfill
 \mkern-6mu\mathord\rightarrow$}%
\def\overrightarrow{\mathpalette\overrightarrow@}%
\def\overrightarrow@#1#2{\vbox{\ialign{##\crcr\rightarrowfill@#1\crcr
 \noalign{\kern-\ex@\nointerlineskip}$\m@th\hfil#1#2\hfil$\crcr}}}%

\def\overleftarrow{\mathpalette\overleftarrow@}%
\def\overleftarrow@#1#2{\vbox{\ialign{##\crcr\leftarrowfill@#1\crcr
 \noalign{\kern-\ex@\nointerlineskip}$\m@th\hfil#1#2\hfil$\crcr}}}%
\def\overleftrightarrow{\mathpalette\overleftrightarrow@}%
\def\overleftrightarrow@#1#2{\vbox{\ialign{##\crcr
   \leftrightarrowfill@#1\crcr
 \noalign{\kern-\ex@\nointerlineskip}$\m@th\hfil#1#2\hfil$\crcr}}}%
\def\underrightarrow{\mathpalette\underrightarrow@}%
\def\underrightarrow@#1#2{\vtop{\ialign{##\crcr$\m@th\hfil#1#2\hfil
  $\crcr\noalign{\nointerlineskip}\rightarrowfill@#1\crcr}}}%

\def\underleftarrow{\mathpalette\underleftarrow@}%
\def\underleftarrow@#1#2{\vtop{\ialign{##\crcr$\m@th\hfil#1#2\hfil
  $\crcr\noalign{\nointerlineskip}\leftarrowfill@#1\crcr}}}%
\def\underleftrightarrow{\mathpalette\underleftrightarrow@}%
\def\underleftrightarrow@#1#2{\vtop{\ialign{##\crcr$\m@th
  \hfil#1#2\hfil$\crcr
 \noalign{\nointerlineskip}\leftrightarrowfill@#1\crcr}}}%


\def\qopnamewl@#1{\mathop{\operator@font#1}\nlimits@}
\let\nlimits@\displaylimits
\def\setboxz@h{\setbox\z@\hbox}

\def\varlim@#1#2{\mathop{\vtop{\ialign{##\crcr
 \hfil$#1\m@th\operator@font lim$\hfil\crcr
 \noalign{\nointerlineskip}#2#1\crcr
 \noalign{\nointerlineskip\kern-\ex@}\crcr}}}}

 \def\rightarrowfill@#1{\m@th\setboxz@h{$#1-$}\ht\z@\z@
  $#1\copy\z@\mkern-6mu\cleaders
  \hbox{$#1\mkern-2mu\box\z@\mkern-2mu$}\hfill
  \mkern-6mu\mathord\rightarrow$}
\def\leftarrowfill@#1{\m@th\setboxz@h{$#1-$}\ht\z@\z@
  $#1\mathord\leftarrow\mkern-6mu\cleaders
  \hbox{$#1\mkern-2mu\copy\z@\mkern-2mu$}\hfill
  \mkern-6mu\box\z@$}

\def\projlim{\qopnamewl@{proj\,lim}}
\def\injlim{\qopnamewl@{inj\,lim}}
\def\varinjlim{\mathpalette\varlim@\rightarrowfill@}
\def\varprojlim{\mathpalette\varlim@\leftarrowfill@}
\def\varliminf{\mathpalette\varliminf@{}}
\def\varliminf@#1{\mathop{\underline{\vrule\@depth.2\ex@\@width\z@
   \hbox{$#1\m@th\operator@font lim$}}}}
\def\varlimsup{\mathpalette\varlimsup@{}}
\def\varlimsup@#1{\mathop{\overline
  {\hbox{$#1\m@th\operator@font lim$}}}}

%
%
%
%
%
%
%
%
%
%
%
%
%
%
%
%
%
%
%
%
%
%
%

%
%
%
%
%
%
%
%
%
%
%
%
%
%
%
%
%
%
%
%
%
%

%
%
%
%
%
%
%
%
%
%
%
%
%
%
%
%
%
%
%
%
%
%
%
%
\begingroup \catcode `|=0 \catcode `[= 1
\catcode`]=2 \catcode `\{=12 \catcode `\}=12
\catcode`\\=12 
|gdef|@alignverbatim#1\end{align}[#1|end[align]]
|gdef|@salignverbatim#1\end{align*}[#1|end[align*]]

|gdef|@alignatverbatim#1\end{alignat}[#1|end[alignat]]
|gdef|@salignatverbatim#1\end{alignat*}[#1|end[alignat*]]

|gdef|@xalignatverbatim#1\end{xalignat}[#1|end[xalignat]]
|gdef|@sxalignatverbatim#1\end{xalignat*}[#1|end[xalignat*]]

|gdef|@gatherverbatim#1\end{gather}[#1|end[gather]]
|gdef|@sgatherverbatim#1\end{gather*}[#1|end[gather*]]

|gdef|@gatherverbatim#1\end{gather}[#1|end[gather]]
|gdef|@sgatherverbatim#1\end{gather*}[#1|end[gather*]]

|gdef|@multilineverbatim#1\end{multiline}[#1|end[multiline]]
|gdef|@smultilineverbatim#1\end{multiline*}[#1|end[multiline*]]

|gdef|@arraxverbatim#1\end{arrax}[#1|end[arrax]]
|gdef|@sarraxverbatim#1\end{arrax*}[#1|end[arrax*]]

|gdef|@tabulaxverbatim#1\end{tabulax}[#1|end[tabulax]]
|gdef|@stabulaxverbatim#1\end{tabulax*}[#1|end[tabulax*]]

|endgroup

\def\align{\@verbatim \frenchspacing\@vobeyspaces \@alignverbatim
You are using the "align" environment in a style in which it is not defined.}

\@namedef{align*}{\@verbatim\@salignverbatim
You are using the "align*" environment in a style in which it is not defined.}
\expandafter\let\csname endalign*\endcsname =\endtrivlist

\def\alignat{\@verbatim \frenchspacing\@vobeyspaces \@alignatverbatim
You are using the "alignat" environment in a style in which it is not defined.}

\@namedef{alignat*}{\@verbatim\@salignatverbatim
You are using the "alignat*" environment in a style in which it is not defined.}
\expandafter\let\csname endalignat*\endcsname =\endtrivlist

\def\xalignat{\@verbatim \frenchspacing\@vobeyspaces \@xalignatverbatim
You are using the "xalignat" environment in a style in which it is not defined.}

\@namedef{xalignat*}{\@verbatim\@sxalignatverbatim
You are using the "xalignat*" environment in a style in which it is not defined.}
\expandafter\let\csname endxalignat*\endcsname =\endtrivlist

\def\gather{\@verbatim \frenchspacing\@vobeyspaces \@gatherverbatim
You are using the "gather" environment in a style in which it is not defined.}

\@namedef{gather*}{\@verbatim\@sgatherverbatim
You are using the "gather*" environment in a style in which it is not defined.}
\expandafter\let\csname endgather*\endcsname =\endtrivlist

\def\multiline{\@verbatim \frenchspacing\@vobeyspaces \@multilineverbatim
You are using the "multiline" environment in a style in which it is not defined.}

\@namedef{multiline*}{\@verbatim\@smultilineverbatim
You are using the "multiline*" environment in a style in which it is not defined.}
\expandafter\let\csname endmultiline*\endcsname =\endtrivlist

\def\arrax{\@verbatim \frenchspacing\@vobeyspaces \@arraxverbatim
You are using a type of "array" construct that is only allowed in AmS-LaTeX.}

\def\tabulax{\@verbatim \frenchspacing\@vobeyspaces \@tabulaxverbatim
You are using a type of "tabular" construct that is only allowed in AmS-LaTeX.}

\@namedef{arrax*}{\@verbatim\@sarraxverbatim
You are using a type of "array*" construct that is only allowed in AmS-LaTeX.}
\expandafter\let\csname endarrax*\endcsname =\endtrivlist

\@namedef{tabulax*}{\@verbatim\@stabulaxverbatim
You are using a type of "tabular*" construct that is only allowed in AmS-LaTeX.}
\expandafter\let\csname endtabulax*\endcsname =\endtrivlist


\def\@@eqncr{\let\@tempa\relax
    \ifcase\@eqcnt \def\@tempa{& & &}\or \def\@tempa{& &}%
      \else \def\@tempa{&}\fi
     \@tempa
     \if@eqnsw
        \iftag@
           \@taggnum
        \else
           \@eqnnum\stepcounter{equation}%
        \fi
     \fi
     \global\tag@false
     \global\@eqnswtrue
     \global\@eqcnt\z@\cr}

 \def\endequation{%
     \ifmmode\ifinner 
      \iftag@
        \addtocounter{equation}{-1} 
        $\hfil
           \displaywidth\linewidth\@taggnum\egroup \endtrivlist
        \global\tag@false
        \global\@ignoretrue   
      \else
        $\hfil
           \displaywidth\linewidth\@eqnnum\egroup \endtrivlist
        \global\tag@false
        \global\@ignoretrue 
      \fi
     \else   
      \iftag@
        \addtocounter{equation}{-1} 
        \eqno \hbox{\@taggnum}
        \global\tag@false%
        $$\global\@ignoretrue
      \else
        \eqno \hbox{\@eqnnum}
        $$\global\@ignoretrue
      \fi
     \fi\fi
 } 

 \newif\iftag@ \tag@false
 
 \def\tag{\@ifnextchar*{\@tagstar}{\@tag}}
 \def\@tag#1{%
     \global\tag@true
     \global\def\@taggnum{(#1)}}
 \def\@tagstar*#1{%
     \global\tag@true
     \global\def\@taggnum{#1}%
}


\makeatother

\def\duav #1{\langle #1\rangle}
\def\grad {{\text{\bf grad}}}
\def\ub {{\bf u}}
\def\vb {{\bf v}}
\def\e {\varepsilon}
\def\dive {\text{div}}
\def\dn {\partial_{\bf n}}
\def\teta {\theta}
\begin{document}

\title{Collisions in shape memory alloys}
\author{Michel Fr\'emond \footnotemark[1] \and Michele Marino
\footnotemark[2] \and Elisabetta Rocca \footnotemark[3]}
\date{\today}
\maketitle

\begin{abstract}
We present here a model for instantaneous collisions in a solid made of
shape memory alloys (SMA) by means of a predictive theory which is based on
the introduction not only of macroscopic velocities and temperature, but
also of microscopic velocities responsible of the austenite-martensites
phase changes. Assuming time discontinuities for velocities, volume
fractions and temperature, and applying the principles of thermodynamics for
non-smooth evolutions together with constitutive laws typical of SMA, we end
up with a system of nonlinearly coupled elliptic equations for which we
prove an existence and uniqueness result in the 2 and 3 D cases. Finally, we
also present numerical results for a SMA 2D solid subject to an external
percussion by an hammer stroke.
\end{abstract}

\noindent \textbf{Key words. } Shape memory alloys, collisions, existence
and uniqueness result, numerical examples. \newline

\noindent \textbf{AMS subject classification. } 73C02, 73C35, 35B65.

\renewcommand{\thefootnote}{\fnsymbol{footnote}} \footnotetext[1]{%
Dipartimento di Ingegneria Civile e Informatica - Universit\`a di Roma Tor
Vergata, via del Politecnico 1, Roma, Italy (\texttt{michel.fremond@uniroma2.it}).}
\footnotetext[2]{%
Institut f{\"u}r Kontinuumsmechanik, Appelstr.~11, 30167, Hannover, Germany (%
\texttt{marino@ikm.uni-hannover.de}).} \footnotetext[3]{%
Universit\`{a} degli Studi di Pavia, Dipartimento di Matematica, and IMATI-C.N.R., Via Ferrata
1, 27100, Pavia, Italy (\texttt{elisabetta.rocca@unipv.it}).}

\section{Introduction}

Collisions of solids produce discontinuities of velocities and
discontinuities of temperatures at collision time $t$. We consider a solid
made of shape memory alloys. It occupies domain $\Omega $ with boundary $%
\partial \Omega $. There is a vast literature on shape memory alloys. We
mention only the predictive theory which introduces besides the macroscopic
velocity and the temperature, velocities at the microscopic level which are
responsible for the phase changes between the martensites and austenite
phases, \cite{fre1987}, \cite{fremya1996}. We have chosen to represent at
the macroscopic level, the velocities at the microscopic level by the
velocities
\begin{equation}
\frac{d\beta _{i}}{dt},  \notag
\end{equation}%
of the volumes fractions $\beta _{i}$ of the different phases, $\beta _{3}$
for the austenite phase and $\beta _{1}$, $\beta _{2}$ for the martensite
phases, assuming there are two of them. There can be voids in the mixture of
the three phases with volume fraction $\beta_{voids}$.

We investigate collisions involving shape memory alloys and assume these
collisions are instantaneous, \cite{fre2007}. We denote with subscript $^{-}$%
, quantities before collision and subscript $^{+}$, quantities after
collision. For example, we denote
\begin{equation}
\vec{U}=(\vec{U}^{+},\vec{U}^{-}),  \notag
\end{equation}%
the actual velocities, $\vec{U}^{-}$ being the actual velocity before the
collision and $\vec{U}^{+}$ being the actual velocity after. We denote $%
\left[ X\right] =X^{+}-X^{+}$, the discontinuity of quantity $X$

In collisions, there are rapid variations of the velocities at the
microscopic level resulting in rapid variations of the volumes fractions $%
\beta _{i}$. Thus we assume also the volume fractions are discontinuous \cite%
{fre2007}, \cite{fremond2011}, \cite{fremar2012}, \cite{mar2013}
\begin{equation}
\left[ \beta _{i}\right] =\beta _{i}^{+}-\beta _{i}^{-}.  \notag
\end{equation}

The collisions being dissipative phenomena, they produce burst of heat which
intervene in the thermal evolution. They result in temperature
discontinuities and they may produce phase changes. Moreover voids may also
appear, \cite{freroc2009}. Thus the volume fractions discontinuities and the
temperature discontinuities are coupled as they are in smooth evolutions,
\cite{fre2000}. Transient and fast but smooth phenomena in shape memory
alloys are investigated in \cite{chen2000}, \cite{col2009}. The last paper
contains experimental results.

\section{The Model}

\subsection{The State Quantities and the Quantities which Describe the
Evolution}

The state quantities are
\begin{equation}
E=\left( \varepsilon (\vec{u}),\beta _{i},\mathbf{grad}\beta _{i},T\right) .
\notag
\end{equation}%
In a collision, the small displacement $\vec{u}$ does not change, thus the
small deformation, $\varepsilon (\vec{u})$, remains constant. But as already
seen, the phase volume fractions and the temperature $T$ do vary in
collisions. We have the state quantities before collision, $E^{-}$, and $%
E^{+}$ after.

The quantities which describe the evolution are the evolution of the
velocity of deformation
\begin{equation*}
D(\frac{\vec{U}^{+}+\vec{U}^{-}}{2}),
\end{equation*}%
where $D$ is the usual deformation operator, and the gradient of the average
temperature introduced in collision theory, together with the variation of
the volume fractions and their gradients. The average temperature is%
\begin{equation}
\underline{T}=\frac{T^{+}+T^{-}}{2},  \notag
\end{equation}%
where $T^{+}$ and $T^{-}$ are the temperatures after and before collision.
Its gradient is involved in the description of heat diffusion occurring in
collisions. Thus the quantities which describe the evolution, $\delta E$ is%
\begin{equation}
\delta E=(D(\frac{\vec{U}^{+}+\vec{U}^{-}}{2}),\left[ \beta _{i}\right] ,%
\mathbf{grad}\left[ \beta _{i}\right] ,\mathbf{grad}\underline{T}).  \notag
\end{equation}%
The discontinuity $\left[ \beta _{i}\right] $ is the non smooth part of the
velocity $d\beta _{i}/dt$. Let us note that $\delta E$ is objective.

\subsection{The Equations of Motion}

The equations of motion result from the principle of virtual work
introducing percussion stress $\Sigma $, percussion work $B_{p}$ and
percussion work flux vector $\vec{H}_{p}$, \cite{fremar2016}. They are
\begin{gather}
\rho \left[ \vec{U}\right] =\func{div}\Sigma ,\ in\;\Omega ,  \label{mouv1}
\\
\Sigma =\Sigma ^{T},\ in\;\Omega ,  \label{mouv1bis} \\
-B_{i}^{p}+\func{div}\vec{H}_{i}^{p}=0,\ in\;\Omega ,  \label{mouv2}
\end{gather}
and
\begin{equation}
\Sigma \vec{N}=\vec{G}^{p},\ \vec{H}_{i}^{p}\cdot \vec{N}=0,\ on\ \partial
\Omega ,  \label{mouvcl}
\end{equation}%
where $\vec{G}^{p}$ is the surface external percussion, \cite{fremar2016}.
It is assumed no body percussion and no external action at the microscopic
level. Equation (\ref{mouv1bis}) is the angular momentum equation of motion,
see \cite{frevirtwork2017} p. 248-251.

\subsection{The Mass Balance}

It is%
\begin{equation}
\left[ \rho (\beta _{1}+\beta _{2}+\beta _{3})\right] =0,  \notag
\end{equation}%
or%
\begin{equation}
\left[ (\beta _{1}+\beta _{2}+\beta _{3})\right] =0,  \label{mass}
\end{equation}%
assuming the density $\rho $ is constant and the same for each phase. A possible
evolution of the time discontinuity of the voids volume fraction
\begin{equation}
\beta _{voids}=1-(\beta _{1}+\beta _{2}+\beta _{3}),  \notag
\end{equation}
is given by a constitutive law. Examples are given in \cite{fremar2016}.

\subsection{The First Law of Thermodynamics}

The first and second laws of thermodynamics intervene in the derivation of
the constitutive laws. We recall them to get the new mechanical and thermal
collision constitutive laws.

The first law can be written as
\begin{equation}
\left[ \mathcal{E}\right] +\left[ \mathcal{K}\right] =\mathcal{T}_{ext}%
+\mathcal{C},  \label{9energie}
\end{equation}%
where
\begin{equation}
\mathcal{E=}\int_{\Omega }ed\Omega ,  \notag
\end{equation}%
is the internal energy, $\mathcal{K}$ is the kinetic energy, $\mathcal{C}$
is the thermal impulse received by the solid, and $\mathcal{T}_{ext}$ is
actual work of the external forces. With the principle of virtual work where
the velocities are the actual velocities, i.e., with the theorem of kinetic
energy, the first law gives
\begin{equation}
\left[ \mathcal{E}\right] =-\mathcal{T}_{int}+\mathcal{C},  \notag
\end{equation}%
where $\mathcal{T}_{int}$ is actual work of the internal forces. The
temperature may be discontinuous, \cite{fre2007}, \cite{fre2000}: we have
already defined $T^{-}$ the temperature before the collision and $T^{+}$ the
temperature after the collision and
\begin{equation}
\underline{T}=\frac{T^{+}+T^{-}}{2}.  \notag
\end{equation}%
We assume that the external impulse heat is received either at temperature $%
T^{-}$ or at temperature $T^{+}$
\begin{equation}
\mathcal{C=}\int_{\partial \Omega }-\left( T^{+}\vec{Q}_{p}^{+}+T^{-}\vec{Q}%
_{p}^{-}\right) \cdot \vec{N}d\Gamma +\int_{\Omega }T^{+}\mathcal{B}%
^{+}+T^{-}\mathcal{B}^{-}d\Omega ,  \notag
\end{equation}%
where $\vec{Q}_{p}$ is the impulsive entropy flux vector and $\mathcal{B}$
the impulsive entropy source. Relationship (\ref{9energie}) being true for
any subdomain of $\Omega $, we get the energy balance law
\begin{eqnarray}
\left[ e\right] &=&\Sigma :D(\frac{\vec{U}^{+}+\vec{U}^{-}}{2})+B_{i}^{p}%
\left[ \beta _{i}\right] +\vec{H}_{i}^{p}\cdot \mathbf{grad}\left[ \beta _{i}%
\right] \\
&&-\func{div}(T^{+}\vec{Q}_{p}^{+}+T^{-}\vec{Q}_{p}^{-})+T^{+}\mathcal{B}%
^{+}+T^{-}\mathcal{B}^{-}.  \notag
\end{eqnarray}%
By using the Helmholtz relationship, $e=\Psi +Ts$, we have
\begin{gather}
\left[ e\right] =\left[ \Psi \right] +\underline{s}\left[ T\right] +%
\underline{T}\left[ s\right]  \notag \\
=\Sigma :D(\frac{\vec{U}^{+}+\vec{U}^{-}}{2})+B_{i}^{p}\left[ \beta _{i}%
\right] +\vec{H}_{i}^{p}\cdot \mathbf{grad}\left[ \beta _{i}\right]  \notag
\\
-\func{div}\left\{ \underline{T}\Sigma \left( \vec{Q}_{p}\right) +\left[ T%
\right] \Delta \left( \vec{Q}_{p}\right) \right\} +\underline{T}\Sigma
\left( \mathcal{B}\right) +\left[ T\right] \Delta \left( \mathcal{B}\right) ,
\label{9relat1}
\end{gather}%
where a sum
\begin{equation}
T^{+}\mathcal{B}^{+}+T^{-}\mathcal{B}^{-}=\Sigma (T\mathcal{B}),  \notag
\end{equation}%
is split in an other sum
\begin{equation}
\Sigma (T\mathcal{B})=\underline{T}\Sigma \left( \mathcal{B}\right) +\left[ T%
\right] \Delta \left( \mathcal{B}\right) ,  \notag
\end{equation}%
with
\begin{equation}
\Sigma \left( \mathcal{B}\right) =\mathcal{B}^{+}+\mathcal{B}^{-}\ and,\
\Delta \left( \mathcal{B}\right) =\frac{\mathcal{B}^{+}-\mathcal{B}^{-}}{2}.
\notag
\end{equation}

\begin{remark}
To avoid too many notation, we use letter $\Sigma $ with two meanings: the
percussion stress $\Sigma $ which appears in the equation of motion and the
sum $\Sigma \left( \mathcal{B}\right) =\mathcal{B}^{+}+\mathcal{B}^{-}$.
They appear in different context and the sum $\Sigma \left( \mathcal{B}%
\right) $ has always an argument.
\end{remark}

\subsection{The Second Law of Thermodynamics}

The Second Law may be stated as
\begin{equation}
\left[ \mathcal{S}\right] =\int_{\Omega }\left[ s\right] d\Omega \geq
-\int_{\Gamma }\left( \vec{Q}_{p}^{+}+\vec{Q}_{p}^{-}\right) \cdot \vec{N}%
d\Gamma +\int_{\Omega }\mathcal{B}^{+}+\mathcal{B}^{-}d\Omega ,  \notag
\end{equation}%
which gives%
\begin{equation}
\left[ s\right] \geq -\func{div}\Sigma \left( \vec{Q}_{p}\right) +\Sigma
\left( \mathcal{B}\right) .  \label{relat2}
\end{equation}

Combining relationships (\ref{9relat1}) and (\ref{relat2}), we get%
\begin{gather}
\left[ \Psi \right] +\underline{s}\left[ T\right] +\func{div}\left( \left[ T%
\right] \Delta (\vec{Q}_{p})\right) -\left[ T\right] \Delta (\mathcal{B)}
\notag \\
\leq \Sigma :D(\frac{\vec{U}^{+}+\vec{U}^{-}}{2})+B_{i}^{p}\left[ \beta _{i}%
\right] +\vec{H}_{i}^{p}\cdot \mathbf{grad}\left[ \beta _{i}\right] -\mathbf{%
grad}\underline{T}\cdot \Sigma (\vec{Q}_{p}).  \label{relation3}
\end{gather}%
Let us note that the right hand side of (\ref{relation3}) is a scalar
product between internal forces and related evolution quantities whereas the
left hand side is not a scalar product. Let us try to relate $\left[ \Psi %
\right] $ to a scalar product. We have
\begin{gather*}
\left[ \Psi \right] =\Psi (T^{+},\beta _{i}^{+},\mathbf{grad}\beta
_{i}^{+})-\Psi (T^{-},\beta _{i}^{-},\mathbf{grad}\beta _{i}^{-}) \\
=\Psi (T^{+},\beta ^{+},\mathbf{grad}\beta ^{+})-\Psi (T^{+},\beta _{i}^{-},%
\mathbf{grad}\beta _{i}^{-}) \\
+\Psi (T^{+},\beta _{i}^{-},\mathbf{grad}\beta _{i}^{-})-\Psi (T^{-},\beta
_{i}^{-},\mathbf{grad}\beta _{i}^{-}).
\end{gather*}

Because the free energy is a concave function of temperature $T$, we have%
\begin{equation*}
\Psi (T^{+},\beta _{i}^{-},\mathbf{grad}\beta _{i}^{-})-\Psi (T^{-},\beta
_{i}^{-},\mathbf{grad}\beta _{i}^{-})\leq -s^{fe}\left[ T\right] ,
\end{equation*}%
with%
\begin{equation}
-s^{fe}\in \hat{\partial}\Psi _{T}(T^{-},\beta _{i}^{-},\mathbf{grad}\beta
_{i}^{-}),  \label{9.entropie}
\end{equation}%
where $\hat{\partial}\Psi _{T}$ is the set of the uppergradients of the
concave function
\begin{equation*}
T\rightarrow \Psi _{T}(T,\beta _{i}^{-},\mathbf{grad}\beta _{i}^{-})=\Psi
(T,\beta _{i}^{-},\mathbf{grad}\beta _{i}^{-}).
\end{equation*}%
We assume $\Psi $ is a convex function of $(\vec{\beta},\mathbf{grad}$ $\vec{%
\beta})$. Thus we have%
\begin{equation*}
\Psi (T^{+},\beta _{i}^{+},\mathbf{grad}\beta _{i}^{+})-\Psi (T^{+},\beta
_{i}^{-},\mathbf{grad}\beta _{i}^{-})\leq B_{i}^{fe}\left[ \beta _{i}\right]
+\vec{H}_{i}^{fe}\cdot \mathbf{grad}\left[ \beta _{i}\right] ,
\end{equation*}%
with%
\begin{equation}
\left( B_{i}^{fe},\vec{H}_{i}^{fe}\right) \in \partial \Psi _{\beta ,\mathbf{%
grad}\beta }(T^{+},\beta _{i}^{+},\mathbf{grad}\beta _{i}^{+}),
\label{relation6}
\end{equation}%
where $\partial \Psi _{\beta ,\mathbf{grad}\beta }$ is the subdifferential
set of convex function $\Psi $ of $(\vec{\beta},\mathbf{grad}$ $\vec{\beta})$%
. The internal forces $\left( B_{i}^{fe},\vec{H}_{i}^{fe}\right) $ depend on
the future state $(T^{+},\beta _{i}^{+},\mathbf{grad}$ $\beta _{i}^{+})$, in
agreement with our idea that the constitutive laws sum up what occurs during
the discontinuity of the state quantities. It results%
\begin{eqnarray*}
&&\left[ \Psi \right] +\underline{s}\left[ T\right] +\hbox{div}\left[ T%
\right] \Delta (\vec{Q}_{p})-\left[ T\right] \Delta (\mathcal{B)} \\
&\leq &B_{i}^{fe}\left[ \beta _{i}\right] +\vec{H}_{i}^{fe}\cdot \mathbf{grad%
}\left[ \beta _{i}\right] -s^{fe}\left[ T\right] +\bar{s}\left[ T\right] +%
\hbox{div}\left[ T\right] \Delta (\vec{Q}_{p}) \\
&=&B_{i}^{fe}\left[ \beta _{i}\right] +\vec{H}_{i}^{fe}\cdot \mathbf{grad}%
\left[ \beta _{i}\right] +(-s^{fe}+\bar{s}+\hbox{div}\Delta (\vec{Q}_{p}))%
\left[ T\right] +\Delta (\vec{Q}_{p})\cdot \mathbf{grad}\left[ T\right] ..
\end{eqnarray*}

As usual, we assume no dissipation with respect to $\left[ T\right] $, \cite%
{fre2007}, \cite{fremond2011}, \cite{fre2000} and have%
\begin{gather}
\Delta (\vec{Q}_{p})=0,  \label{9.nondissipation} \\
-s^{fe}+\bar{s}+\func{div}\Delta (\vec{Q}_{p})-\Delta (\mathcal{B)}=0.
\notag
\end{gather}%
Thus%
\begin{equation}
\Delta (\mathcal{B)+}S=0,  \label{relation4}
\end{equation}%
with $S=s^{fe}-\bar{s}$. This relationship splits either the received heat
impulse, $T^{+}\mathcal{B}^{+}+T^{-}\mathcal{B}^{-}$, or the received
entropy impulse, $\mathcal{B}^{+}+\mathcal{B}^{-}$, between the two
temperatures, $T^{+}$ and $T^{-}$. Let us note that this relationship
depends on the future state via the average entropy $\bar{s}$.

We may choose a pseudo-potential of dissipation%
\begin{equation*}
\Phi (\delta E^{\pm },E^{+},E^{-})=\Phi (D(\frac{\vec{U}^{+}+\vec{U}^{-}}{2}%
),\left[ \beta _{i}\right] ,\mathbf{grad}\left[ \beta _{i}\right] ,\mathbf{%
grad}\underline{T},\underline{T}),
\end{equation*}%
and constitutive laws%
\begin{eqnarray}
&&\left( \Sigma ,(B_{i}^{p}-B_{i}^{fe}),(\vec{H}_{i}^{p}-\vec{H}_{i}^{fe}),-2%
\vec{Q}_{p}\right)  \notag \\
&\in &\partial \Phi (D(\frac{\vec{U}^{+}+\vec{U}^{-}}{2}),\left[ \beta _{i}%
\right] ,\mathbf{grad}\left[ \beta _{i}\right] ,\mathbf{grad}\underline{T},%
\underline{T}).  \label{relation7}
\end{eqnarray}

It results from this choice that the internal forces satisfy inequality
\begin{equation}
0\leq \Sigma :D(\frac{\vec{U}^{+}+\vec{U}^{-}}{2})+(B_{i}^{p}-B_{i}^{fe})%
\left[ \beta _{i}\right] +(\vec{H}_{i}^{p}-\vec{H}_{i}^{fe})\cdot \mathbf{%
grad}\left[ \beta _{i}\right] -2\mathbf{grad}\underline{T}\cdot \vec{Q}_{p},
\label{relation5}
\end{equation}

and the second law is satisfied

\begin{theorem}
If constitutive laws (\ref{relation6}), (\ref{relation4}) and (\ref%
{relation7}) are satisfied, then the second law is satisfied.
\end{theorem}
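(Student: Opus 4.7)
The plan is to verify that inequality (\ref{relation3}), which is the combined entropy-energy inequality from which the second law follows once the first law is taken for granted, is a consequence of the three constitutive prescriptions listed (together with the no-dissipation hypothesis (\ref{9.nondissipation}) and the supergradient selection (\ref{9.entropie}), both already in force just before the statement). The strategy has three stages: first bound $[\Psi]$ from above in terms of the free-energy responses $s^{fe}$, $B_{i}^{fe}$, $\vec H_{i}^{fe}$; next use the splitting identities to annihilate every $[T]$-contribution; finally invoke the pseudo-potential of dissipation to close the remaining non-negativity.

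For the first stage I would decompose
\[
[\Psi]=\bigl\{\Psi(T^{+},\beta_{i}^{+},\mathbf{grad}\beta_{i}^{+})-\Psi(T^{+},\beta_{i}^{-},\mathbf{grad}\beta_{i}^{-})\bigr\}+\bigl\{\Psi(T^{+},\beta_{i}^{-},\mathbf{grad}\beta_{i}^{-})-\Psi(T^{-},\beta_{i}^{-},\mathbf{grad}\beta_{i}^{-})\bigr\},
\]
as in the text preceding the statement. Convexity of $\Psi$ in $(\vec\beta,\mathbf{grad}\vec\beta)$, together with the subgradient selection (\ref{relation6}) taken at the post-collision state, bounds the first bracket by $B_{i}^{fe}[\beta_{i}]+\vec H_{i}^{fe}\cdot\mathbf{grad}[\beta_{i}]$. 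Concavity of $\Psi$ in $T$, together with the supergradient selection (\ref{9.entropie}) at the pre-collision temperature, bounds the second bracket by $-s^{fe}[T]$. Summing gives $[\Psi]\le B_{i}^{fe}[\beta_{i}]+\vec H_{i}^{fe}\cdot\mathbf{grad}[\beta_{i}]-s^{fe}[T]$.

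Substituting this bound into the left-hand side of (\ref{relation3}) and expanding $\func{div}([T]\Delta(\vec Q_{p}))=[T]\,\func{div}\Delta(\vec Q_{p})+\Delta(\vec Q_{p})\cdot\mathbf{grad}[T]$, the assumption $\Delta(\vec Q_{p})=0$ from (\ref{9.nondissipation}) annihilates both the $\mathbf{grad}[T]$ term and the divergence term, while the splitting identity (\ref{relation4}), namely $\Delta(\mathcal{B})+s^{fe}-\bar s=0$ with $\bar s=\underline s$ the post/pre average entropy, cancels the coefficient of the surviving $[T]$. What remains of (\ref{relation3}) is exactly (\ref{relation5}).

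Finally, (\ref{relation5}) is immediate from the inclusion (\ref{relation7}) together with the defining properties of a pseudo-potential of dissipation: $\Phi$ is convex, non-negative, and vanishes at the origin, so for any element of $\partial\Phi(\delta E)$ the duality pairing against $\delta E$ dominates $\Phi(\delta E)-\Phi(0)\ge 0$, which is precisely (\ref{relation5}). The main technical subtlety will be the bookkeeping of sign conventions, in particular the asymmetric use of supergradients at the pre-collision state in the variable $T$ versus subgradients at the post-collision state in $(\beta,\mathbf{grad}\beta)$, so that the concavity and convexity bounds line up correctly with the signs of the jumps $[T]=T^{+}-T^{-}$ and $[\beta_{i}]=\beta_{i}^{+}-\beta_{i}^{-}$.
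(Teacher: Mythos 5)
Your proposal is correct and takes essentially the same route as the paper: the paper's terse proof simply invokes the chain of estimates already derived in the text --- the convexity/concavity bounds on $\left[\Psi\right]$ via (\ref{relation6}) and (\ref{9.entropie}), the cancellation of the $\left[T\right]$ terms through (\ref{9.nondissipation}) and (\ref{relation4}), and the reduction of (\ref{relation3}) to (\ref{relation5}), which follows from (\ref{relation7}) --- precisely the steps you spell out. Your explicit justification that any element of $\partial\Phi$ paired with $\delta E$ dominates $\Phi(\delta E)-\Phi(0)\geq 0$ is the standard argument the paper leaves implicit.
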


\begin{proof}
If relationship (\ref{relation7}) is satisfied, inequality (\ref{relation5})
is satisfied. Then it is easy to prove that the inequality (\ref{relation3})
which is equivalent to the second law is satisfied.
\end{proof}

\begin{remark}
The discontinuity $\left[ \Psi \right] $ may be split in a different manner%
\begin{gather*}
\left[ \Psi \right] =\Psi (T^{+},\beta _{i}^{+},\mathbf{grad}\beta
_{i}^{+})-\Psi (T^{-},\beta _{i}^{-},\mathbf{grad}\beta _{i}^{-}) \\
=\Psi (T^{+},\beta _{i}^{+},\mathbf{grad}\beta _{i}^{+})-\Psi (T^{-},\beta
_{i}^{+},\mathbf{grad}\beta _{i}^{+}) \\
+\Psi (T^{-},\beta _{i}^{+},\mathbf{grad}\beta _{i}^{+})-\Psi (T^{-},\beta
_{i}^{-},\mathbf{grad}\beta _{i}^{-}).
\end{gather*}%
We get%
\begin{equation*}
\Psi (T^{+},\beta _{i}^{+},\mathbf{grad}\beta _{i}^{+})-\Psi (T^{-},\beta
_{i}^{+},\mathbf{grad}\beta _{i}^{+})\leq -\hat{s}^{fe}\left[ T\right] ,
\end{equation*}%
\begin{equation*}
-\hat{s}^{fe}\in \hat{\partial}\Psi _{T}(T^{-},\beta _{i}^{+},\mathbf{grad}%
\beta _{i}^{+}).
\end{equation*}%
If $\Psi $ it is a convex function of $(\beta ,\mathbf{grad}$ $\beta )$%
\begin{equation*}
\Psi (T^{-},\beta _{i}^{+},\mathbf{grad}\beta _{i}^{+})-\Psi (T^{-},\beta
_{i}^{-},\mathbf{grad}\beta _{i}^{-}))\leq \hat{B}^{fe}\left[ \beta \right] +%
\hat{H}^{fe}\cdot \mathbf{grad}\left[ \beta \right]
\end{equation*}%
\begin{equation*}
\left( \hat{B}^{fe},\hat{H}^{fe}\right) \in \partial \Psi _{\beta ,\mathbf{%
grad}\beta }(T^{-},\beta _{i}^{-},\mathbf{grad}\beta _{i}^{-}).
\end{equation*}%
The internal forces $\left( \hat{B}^{fe},\hat{H}^{fe}\right) $ depend
entirely on the past state $(T^{-},\beta _{i}^{-},\mathbf{grad}\beta
_{i}^{-})$. Because we think that the constitutive laws sum up what occurs
during the discontinuity, it is mandatory that the internal forces depend on
the future state $(T^{+},\beta _{i}^{+},\mathbf{grad}$ $\beta _{i}^{+})$.
Thus this splitting of the free energy does not seem as good as the one we
have chosen.
\end{remark}

\subsection{The Free Energy}

\label{sec:9.7}

A shape memory alloy is considered as a mixture of the martensite and
austenite phases with volume fractions $\beta _{i}$. The volume free energy
of the mixture we choose is 
\begin{equation}
\Psi =\Psi (E)=\sum\limits_{i=1}^{3}\beta _{i}\Psi _{i}(E)+h(E),  \label{10supprimer}
\end{equation}
where the $\Psi _{i}$'s are the volume free energies of the ${i}$ phases and
$h$ is a free energy describing interactions between the different phases.
We have assumed that internal constraints are physical properties, hence, we
decide to choose properly the two functions describing the material, i.e.,
the free energy $\Psi $ and the pseudo-potential of dissipation $\Phi $, in
order to take these constraints into account. Since, the pseudo-potential
describes the kinematic properties (i.e., properties which depend on the
velocities) and the free energy describes the state properties, obviously
the internal constraints
\begin{equation}
0\leq \beta _{i}\leq 1,  \label{1vide}
\end{equation}%
and
\begin{equation}
\beta _{1}+\beta _{2}+\beta _{3}\leq 1,  \label{2vide}
\end{equation}%
because voids may appear, are to be taken into account with the free energy $%
\Psi $, \cite{freroc2009}.

For this purpose, we assume the $\Psi _{i}$'s are defined over the whole
linear space spanned by $\beta _{i}$ and the free energy is defined by
\begin{equation*}
\Psi (E)=\beta _{1}\Psi _{1}(E)+\beta _{2}\Psi _{2}(E)+\beta _{3}\Psi
_{3}(E)+h(E)\,.
\end{equation*}%
We choose the very simple interaction free energy%
\begin{equation*}
h(E)=I_{C}(\vec{\beta})+\frac{k}{2}\left\vert \mathbf{grad}\vec{\beta}%
\right\vert ^{2},
\end{equation*}
where $\vec{\beta}=(\beta_1,\beta_2,\beta_3)$ and $I_{C}$ is
the indicator function of the convex set
\begin{equation}
C=\{(\gamma_{1},\gamma_{2},\gamma_{3})\in \mathbb{R}^{3};0\leq
\gamma_{i}\leq 1;\gamma_{1}+\gamma_{2}+\gamma_{3}\leq 1\}\,.  \label{defC}
\end{equation}
Moreover, by $(k/2)\left\vert \mathbf{grad}\vec{\beta}%
\right\vert ^{2}$ we mean the product of two tensors $\mathbf{grad}\vec{\beta%
}$ multiplied by the \textsl{interfacial energy }coefficient $(k/2)>0$. The
terms $I_{C}(\vec{\beta})+(k/2)\left\vert \mathbf{grad}\vec{\beta}%
\right\vert ^{2}$ may be seen as a \textsl{mixture or interaction free-energy%
}.

The only effect of $I_{C}(\vec{\beta})$ is to guarantee that the proportions
$\beta _{1}$, $\beta _{2}$ and $\beta _{3}$ take admissible physical values,
i.e.~they satisfy constraints (\ref{1vide}) and (\ref{2vide}) (see also \ref%
{defC}). The interaction free energy term $I_{C}(\vec{\beta})$ is equal to
zero when the mixture is physically possible ($\vec{\beta}\in C$) and to $%
+\infty $ when the mixture is physically impossible ($\vec{\beta}\notin C$).

Let us note even if the free energy of the voids phase is $0$, the voids
phase has physical properties due to the interaction free energy term $%
(k/2)\left\vert \mathbf{grad}\vec{\beta}\right\vert ^{2}$ which depends on
the gradient of $\vec{\beta}$. It is known that this gradient is related to
the interfaces properties: $\mathbf{grad}{\beta _{1}}$, $\mathbf{grad}{\beta
_{2}}$ describes properties of the voids-martensites interfaces and $\mathbf{%
grad}{\beta _{3}}$ describes properties of the voids-austenite interface. In
this setting, the voids have a role in the phase change and make it
different from a phase change without voids. The model is simple and
schematic but it may be upgraded by introducing sophisticated interaction
free energy depending on $\vec{\beta}$ and on $\mathbf{grad}\vec{\beta}$.

\begin{remark}
A slightly more sophisticated interfacial energy is
\begin{equation}
h(E)=I_{C}(\vec{\beta})+\frac{1}{2}(k_{i}(\mathbf{grad}{\beta _{i})}^{2}),
\label{dif1}
\end{equation}%
with different interaction phase parameters $k_{i}$.
\end{remark}

For the sake of simplicity, we choose the volume free energies, \cite%
{fre1987}, \cite{fremya1996}, \cite{fremond2011}
\begin{gather*}
\Psi _{1}(E)=\frac{1}{2}\varepsilon (\vec{u}):K:\varepsilon (\vec{u})-\tau
(T)\mathbf{1}:\varepsilon (\vec{u})-CT\log T, \\
\Psi _{2}(E)=\frac{1}{2}\varepsilon (\vec{u}):K:\varepsilon (\vec{u})+\tau
(T)\mathbf{1}:\varepsilon (\vec{u})-CT\log T, \\
\Psi _{3}(E)=\frac{1}{2}\varepsilon (\vec{u}):K:\varepsilon (\vec{u})-\frac{%
l_{a}}{T_{0}}(T-T_{0})-CT\log T,
\end{gather*}%
where $K$ is the volume elastic tensor and $C$ the volume heat capacities of
the phases and the quantity $l_{a}$ is the latent heat martensite-austenite
volume phase change at temperature $T_{0}$, $\mathbf{1}$ is the unit tensor.

Concerning the function $\tau (T)$, we assume the schematic simple
expression
\begin{equation*}
\tau (T)=(T-T_{c})\overline{\tau },\;{for\ }T\leq T_{c},\tau (T)=0,\;{for\ }%
T\geq T_{c},
\end{equation*}%
with $\overline{\tau }\leq 0$ and assume the temperature $T_{c}$ is greater
than $T_{0}$. With those assumptions, it results
\begin{gather*}
\Psi (E)=\frac{\beta _{1}+\beta _{2}+\beta _{3}}{2}\left\{ \varepsilon (\vec{%
u}):K:\varepsilon (\vec{u})\right\} \\
-(\beta _{1}-\beta _{2})\tau (T)\mathbf{I}:\varepsilon (\vec{u})-\beta _{3}%
\frac{l_{a}}{T_{0}}(T-T_{0}) \\
-(\beta _{1}+\beta _{2}+\beta _{3})CT\log T+\frac{k}{2}\left\vert \mathbf{%
grad}\vec{\beta}\right\vert ^{2}+I_{C}(\vec{\beta})\,.
\end{gather*}

\begin{remark}
Depending on the sign of $\mathbf{I}:\varepsilon (\vec{u})=\func{div}\vec{u}$%
, free energy $\Psi $ is either a concave or a convex function of
temperature $T$. As explained in \cite{fremond2011} (see Remark 5.3 page
72), it is easy to overcome this difficulty to have in any case $\Psi $ a
concave function of $T$. Experiments show that rigidity matrix $K$ depends
on $T$. With this result, it is easy to have $\Psi $ a concave function of $%
T $, \cite{fremond2011}. In this presentation, we keep the schematic
expression for $\Psi $ and we note that we will assume the solid is not
deformed when colliding, i.e., $\func{div}\vec{u}=0$. In this situation, the
schematic free energy is a concave function of $T$.
\end{remark}

\subsection{The Pseudo-potential of Dissipation}

From experiments, it is known that the behaviour of shape memory alloys
depends on time, i.e., the behaviour is dissipative. We define a
pseudo-potential of dissipation with 
\begin{gather}
\Phi ((D(\frac{\vec{U}^{+}+\vec{U}^{-}}{2}),\left[ \beta _{i}\right] ,%
\mathbf{grad}\left[ \beta _{i}\right] ,\mathbf{grad}\underline{T},\underline{%
T})) \notag \\
=k_{v}\left( D(\frac{\vec{U}^{+}+\vec{U}^{-}}{2})\right) ^{2}+\frac{c}{2}
\sum_{i=1}^{3}\left( \left[ \beta _{i}\right] \right) ^{2}+\frac{\upsilon }{2%
}\sum_{i=1}^{3}\left( \mathbf{grad}\left[ \beta _{i}\right] \right) ^{2}+%
\frac{\lambda }{2\underline{T}}\left( \mathbf{grad}\underline{T}\right) ^{2}
\notag \\
+I_{0}(\left[ \beta _{1}+\beta _{2}+\beta _{3}\right] ), \label{pseudo}
\end{gather}
where $\lambda \geq 0$ represents the thermal conductivity and $k_{v}>0$, $%
c\geq 0$, $\upsilon \geq 0$ stand for collisions viscosities related to
macroscopic and microscopic dissipative phenomena.

\begin{remark}
A slightly more sophisticated pseudo-potential of dissipation is
\begin{gather}
\Phi ((D(\frac{\vec{U}^{+}+\vec{U}^{-}}{2}),\left[ \beta _{i}\right] ,%
\mathbf{grad}\left[ \beta _{i}\right] ,\mathbf{grad}\underline{T},\underline{%
T}))  \label{dif2} \\
=k_{v}\left( D(\frac{\vec{U}^{+}+\vec{U}^{-}}{2})\right) ^{2}+\frac{1}{2}%
\sum_{i=1}^{3}c_{i}\left( \left[ \beta _{i}\right] \right) ^{2}+\frac{1}{2}%
\sum_{i=1}^{3}\frac{\upsilon _{i}}{2}\left( \mathbf{grad}\left[ \beta _{i}%
\right] \right) ^{2}+\frac{\lambda }{2\underline{T}}\left( \mathbf{grad}%
\underline{T}\right) ^{2}  \notag \\
+I_{0}(\left[ \beta _{1}+\beta _{2}+\beta _{3}\right] ),  \label{dif2}
\end{gather}
involving different viscosities for the phases $c_{i}\geq 0$, $\upsilon
_{i}\geq 0$.
\end{remark}

The pseudo-potential takes into account the mass balance relationship (i.e.,
Eq. (\ref{mass})), $I_{0}$ being the indicator function of the origin of $%
\mathbb{R}$.

\subsection{The Constitutive Laws}

They are given by relationships (\ref{9.nondissipation}), (\ref{relation4})
and (\ref{relation6})%
\begin{equation*}
\left( B_{i}^{fe},\vec{H}_{i}^{fe}\right) \in \partial \Psi _{\beta ,\mathbf{%
grad}\beta }(T^{+},\beta _{i}^{+},\mathbf{grad}\beta _{i}^{+}),
\end{equation*}%
giving

\begin{gather*}
\vec{B}^{fe}=\left\vert
\begin{array}{c}
\frac{1}{2}\varepsilon (\vec{u}):K:\varepsilon (\vec{u})-\tau (T^{+})\mathbf{%
1}:\varepsilon (\vec{u})-CT^{+}\log T^{+} \\
\frac{1}{2}\varepsilon (\vec{u}):K:\varepsilon (\vec{u})+\tau (T^{+})\mathbf{%
1}:\varepsilon (\vec{u})-CT^{+}\log T^{+} \\
\frac{1}{2}\varepsilon (\vec{u}):K:\varepsilon (\vec{u})-\frac{l_{a}}{T_{0}}%
(T^{+}-T_{0})-CT^{+}\log T^{+}%
\end{array}%
\right\vert +\vec{B}_{reca}^{fe}, \\
\vec{B}_{reca}^{fe}\in \partial I_{C}(\vec{\beta}^{+}), \\
\vec{H}_{i}^{fe}=k\mathbf{grad}\beta _{i}^{+},
\end{gather*}

and by relationship (\ref{relation7})%
\begin{gather*}
\Sigma =k_{v}D(\vec{U}^{+}+\vec{U}^{-}), \\
B_{i}^{p}-B_{i}^{fe}=c\left[ \beta _{i}\right] -P, \\
-P\in \partial I_{0}(\left[ \beta _{1}+\beta _{2}+\beta _{3}\right] )=%
\mathbb{R}, \\
\vec{H}_{i}^{p}-\vec{H}_{i}^{fe}=\upsilon \mathbf{grad}\left[ \beta _{i}%
\right] , \\
-2\vec{Q}_{p}=\frac{\lambda }{\underline{T}}\mathbf{grad}\underline{T},
\end{gather*}%
where $P$ is the percussion reaction pressure due to the mass balance, and
\begin{gather*}
e=(\beta _{1}+\beta _{2}+\beta _{3})CT+l_{a}\beta _{3}+\frac{1}{2}%
\varepsilon (\vec{u}):K:\varepsilon (\vec{u})+\frac{k}{2}\left\vert \mathbf{%
grad}\vec{\beta}\right\vert ^{2} \\
-(\beta _{1}-\beta _{2})(\tau (T)-T\partial \tau (T))\mathbf{I}:\varepsilon (%
\vec{u}).
\end{gather*}

The internal energy within the small perturbation assumption is
\begin{equation*}
e=(\beta _{1}+\beta _{2}+\beta _{3})CT+l_{a}\beta _{3}.
\end{equation*}

\subsection{The Equations in a Collision}

They result from the energy balance, the equations of motion, the
constitutive laws and the initial situation, i.e., the situation before the
collision.

\paragraph{The Energy Balance}

We assume adiabatic evolution%
\begin{equation*}
T^{+}\mathcal{B}^{+}+T^{-}\mathcal{B}^{-}=0.
\end{equation*}

The energy balance results from relationships (\ref{9energie}), (\ref%
{relation6}) and (\ref{9.nondissipation})
\begin{equation*}
\left[ e\right] +\func{div}(2\underline{T}\vec{Q}_{p})=\Sigma :D(\frac{\vec{U%
}^{+}+\vec{U}^{-}}{2})+B_{i}^{p}\left[ \beta _{i}\right] +\vec{H}%
_{i}^{p}\cdot \mathbf{grad}\left[ \beta _{i}\right] .
\end{equation*}

Note that the reactions, for instance $\vec{B}_{reca}^{fe}$, work, with work%
\begin{equation*}
\vec{B}_{reca}^{fe}\left[ \vec{\beta}\right] \geq 0.
\end{equation*}%
This a property of collisions: the reactions to perfect constraints work
whereas they do not work in smooth evolutions, \cite{fre2007}.

\paragraph{The Equations of Motion and the Mass Balance}

The equations of motion are (\ref{mouv1}), (\ref{mouv1bis}), (\ref{mouv2})
and (\ref{mouvcl}). For the sake of simplicity, we assume the
voids volume fraction, \cite{rod2000}, \cite{freroc2009},
\begin{equation}
\beta _{voids}=1-(\beta _{1}+\beta _{2}+\beta _{3}),  \notag
\end{equation}%
is null and does not evolve in the collision, \cite{rod2000}, \cite%
{freroc2009}. Assuming no interpenetration before collision%
\begin{equation*}
\beta _{1}^{-}+\beta _{2}^{-}+\beta _{3}^{-}=1,
\end{equation*}%
the mass balance relationship (\ref{mass}) gives
\begin{equation}
\beta _{1}^{+}+\beta _{2}^{+}+\beta _{3}^{+}=1,  \label{1bis}
\end{equation}
and
\begin{equation}  \label{massbal}
(\beta _{1}+\beta _{2}+\beta _{3})^+=(\beta _{1}+\beta _{2}+\beta _{3})^-=1.
\end{equation}

\paragraph{The Constitutive Laws}

For the sake of simplicity, we assume the material is undeformed $%
\varepsilon (\vec{u})=0$ at collision time. Thus, using also \eqref{massbal}%
, we have
\begin{gather*}
\vec{B}^{fe}=\left\vert
\begin{array}{c}
0 \\
0 \\
-\frac{l_{a}}{T_{0}}(T^{+}-T_{0})%
\end{array}%
\right\vert -\left\vert
\begin{array}{c}
C T^+ \text{Log}T^+ \\
C T^+ \text{Log}T^+ \\
C T^+ \text{Log}T^+ \\
\end{array}%
\right\vert +\vec{B}_{reca}^{fe}, \\
\vec{B}_{reca}^{fe}\in \partial I_{C}(\vec{\beta}^{+}), \\
\vec{H}_{i}^{fe}=k\mathbf{grad}\beta _{i}^{+},
\end{gather*}
and by relationship (\ref{relation7})
\begin{gather*}
\Sigma =k_{v}D(\vec{U}^{+}+\vec{U}^{-}), \\
B_{i}^{p}-B_{i}^{fe}=c\left[ \beta _{i}\right]-P, \\
\vec{H}_{i}^{p}-\vec{H}_{i}^{fe}=\upsilon \mathbf{grad}\left[ \beta _{i}%
\right] , \\
-2\vec{Q}_{p}=\frac{\lambda }{\underline{T}}\mathbf{grad}\underline{T}.
\end{gather*}

It results internal force $\vec{B}^p$ is

\begin{gather*}
\vec{B}^p= c\left[ \vec{\beta}\right]+ \left\vert
\begin{array}{c}
0 \\
0 \\
-\frac{l_{a}}{T_{0}}(T^{+}-T_{0})%
\end{array}%
\right\vert -(C T^+ \text{Log}T^++P) \left\vert
\begin{array}{c}
1 \\
1 \\
1 \\
\end{array}%
\right\vert +\vec{B}_{reca}^{fe},
\end{gather*}
and by letting
\begin{equation}
\hat{P}=C T^+ \text{Log}T^++P,
\end{equation}
with
\begin{equation}
-\hat{P}\in \partial I_{0}(\left[ \beta _{1}+\beta _{2}+\beta _{3}\right] )=%
\mathbb{R},
\end{equation}
\begin{gather*}
\vec{B}^p= c\left[ \vec{\beta}\right]+ \left\vert
\begin{array}{c}
-\hat{P} \\
-\hat{P} \\
-\frac{l_{a}}{T_{0}}(T^{+}-T_{0})-\hat{P}%
\end{array}%
\right\vert +\vec{B}_{reca}^{fe},
\end{gather*}

The angular momentum equation of motion (\ref{mouv1bis}) is satisfied
and there are four equations for the unknowns $\vec{U}^{+}$, $\vec{\beta}%
^{+}$, $\hat{P}$ and $T^{+}$: the equations of motion for $\vec{U}^{+}$ and $%
\vec{\beta}^{+}$, the mass balance and the equation of motion related to $%
\vec{\beta}^{+}$ for the percussion $\hat{P}$ and the energy balance for the
temperature $T^{+}$.

\subsection{The Mechanical Equations}

As already said, there is an external surface percussion $\vec{G}^{p}$, for
instance an hammer stroke on part $\Gamma _{1}$ of the boundary, the solid
being fixed to a support on part $\Gamma _{0}$, with $\Gamma _{0},\Gamma
_{1} $ a partition of boundary $\delta \Omega $. We assume the solid is at
rest before collision and its temperature is uniform
\begin{equation*}
\vec{U}^{-}=0,\ \func{grad}T^{-}=0.
\end{equation*}

The equations $\vec{U}^{+}$, $\vec{\beta}^{+}$ and $\hat{P}$ are
\begin{gather}
\rho \vec{U}^{+}-k_{v}\func{div}D(\vec{U}^{+})=0,\ in\;\Omega ,  \label{equU}
\\
c\left[ \vec{\beta}\right] -\upsilon \Delta \left[ \vec{\beta}\right]
-k\Delta \vec{\beta}^{+}+\vec{B}_{reca}^{fe}+\left\vert
\begin{array}{c}
-\hat{P} \\
-\hat{P} \\
-\frac{l_{a}}{T_{0}}(T^{+}-T_{0})-\hat{P}%
\end{array}%
\right\vert =0,\ in\;\Omega ,  \label{equatemp} \\
\vec{B}_{reca}^{fe}\in \partial I_{C}(\vec{\beta}^{+}),  \label{convex} \\
\left[ \beta _{1}+\beta _{2}+\beta _{3}\right] =1.  \label{equaP}
\end{gather}

The boundary conditions are%
\begin{gather*}
\Sigma \vec{N}=\vec{G}^{p},\ \vec{H}_{i}^{p}\cdot \vec{N}=0,\ on\ \Gamma
_{1}, \\
\vec{U}^{+}=\vec{U}^{-}=0,\ \vec{H}_{i}^{p}\cdot \vec{N}=0,\ on\ \Gamma _{0}.
\end{gather*}%
where percussion $\vec{G}^{p}$ is the given hammer percussion on part $%
\Gamma _{1}$. The solid is fixed on an immobile support on part $\Gamma _{0}$%
.. Quantities $\beta ^{-}$ before collision are known.

\begin{remark}
We may note that any term of the free energy which depends on $\beta
_{1}+\beta _{2}+\beta _{3}$ does not intervene in the equation giving $\vec{%
\beta}^{+}$ because its derivatives with respect to the $\beta _{i}$ are
absorbed by the reaction percussion pressure $P$. This is the case of
quantity $C T^+ \text{Log}T^+$.
\end{remark}

\subsection{The Thermal Equation}

The equation for $T^{+}$ is using the mass balance (\eqref{massbal})
\begin{gather}
C\left[ T\right] +l_{a}\left[ \beta _{3}\right] +\frac{k}{2}\left[
\left\vert \mathbf{grad}\vec{\beta}\right\vert ^{2}\right] -\lambda \Delta
\underline{T}  \notag \\
=\Sigma :D(\frac{\vec{U}^{+}+\vec{U}^{-}}{2})+B_{i}^{p}\left[ \beta _{i}%
\right] +\vec{H}_{i}^{p}\cdot \mathbf{grad}\left[ \beta _{i}\right] ,\
in\;\Omega ,\   \label{equabeta}
\end{gather}%
with boundary condition%
\begin{equation}
\frac{\partial \underline{T}}{\partial N}=0,  \label{?????}
\end{equation}%
assuming no external heat impulse on part $\Gamma _{1}$ of the boundary and $%
\underline{T}$ or $T^{+}$ is given on part $\Gamma _{0}$. Note that another
boundary condition may be%
\begin{equation}
\lambda \frac{\partial \underline{T}}{\partial N}+k(\underline{T}-T^{ext})=0,
\notag
\end{equation}%
assuming the surface heat impulse is proportional to the temperature
difference with the exterior. Temperature $T^{-}$ before collision is known.

Quantity%
\begin{equation*}
B_{i}^{p}\left[ \beta _{i}\right] +\vec{H}_{i}^{p}\cdot \mathbf{grad}\left[
\beta _{i}\right] \geq 0,
\end{equation*}%
is the dissipated work due to the microscopic motions producing the phase
change. Because the thermal effects are mainly due to the macroscopic
velocity discontinuities, we assume it is negligible compared to the
dissipated work%
\begin{equation*}
\mathcal{T}=\Sigma :D(\frac{\vec{U}^{+}+\vec{U}^{-}}{2})=2k_{v}\left( D(%
\frac{\vec{U}^{+}+\vec{U}^{-}}{2})\right) ^{2},
\end{equation*}%
due to the macroscopic motion. We assume also that in the internal energy
quadratic quantity%
\begin{equation*}
\frac{k}{2}\left[ \left\vert \mathbf{grad}\vec{\beta}\right\vert ^{2}\right]
,
\end{equation*}%
is negigible compared to%
\begin{equation*}
C\left[ T\right] +l_{a}\left[ \beta _{3}\right] .
\end{equation*}%
Thus the thermal equation becomes%
\begin{gather}
C\left[ T\right] +l_{a}\left[ \beta _{3}\right] -\lambda \Delta \underline{T}
\notag \\
=\Sigma :D(\frac{\vec{U}^{+}+\vec{U}^{-}}{2})=2k_{v}\left( D(\frac{\vec{U}%
^{+}+\vec{U}^{-}}{2})\right) ^{2}=\mathcal{T},\ in\;\Omega ,\
\label{thermal}
\end{gather}

\section{Closed Form Example}

We assume the solid is struck by an hammer and that the dissipated work is
known
\begin{equation*}
\mathcal{T}=\Sigma :D(\frac{\vec{U}^{+}+\vec{U}^{-}}{2})=2k_{v}\left( D(%
\frac{\vec{U}^{+}+\vec{U}^{-}}{2})\right) ^{2},
\end{equation*}%
neglecting the dissipated work due to phase changes.

The main assumption is that the volume fractions and temperatures are
homogeneous, i.e., their values do not depends on space variable $x$. The
equations become non linear algebraic equations

\begin{gather}
c\left[ \vec{\beta}\right] +\partial I_{C}(\vec{\beta}^{+})+\left\vert
\begin{array}{c}
-\hat{P} \\
-\hat{P} \\
-\frac{l_{a}}{T_{0}}(T^{+}-T_{0}) -\hat{P}%
\end{array}%
\right\vert =0,  \notag \\
C\left[ T\right] +l_{a}\left[ \beta _{3}\right] =\mathcal{T},  \notag \\
\left[ \beta _{1}+\beta _{2}+\beta _{3}\right] =0.  \label{system}
\end{gather}%
We may prove that system (\ref{system}) has one and only one solution
depending on the quantities before collision.

We investigate the situation where a mixture of the three phases can coexist
after the collision
\begin{equation}
0<\beta _{i}^{+}<1.  \notag
\end{equation}%
The equations are%
\begin{gather*}
c\left[ \vec{\beta}\right] +\partial I_{C}(\vec{\beta}^{+})+\left\vert
\begin{array}{c}
-\hat{P} \\
-\hat{P} \\
-\frac{l_{a}}{T_{0}}(T^{+}-T_{0})-\hat{P}%
\end{array}%
\right\vert =0, \\
C\left[ T\right] +l_{a}\left[ \beta _{3}\right] =\mathcal{T}, \\
\beta _{1}^{+}+\beta _{2}^{+}+\beta _{3}^{+}=1,
\end{gather*}%
with
\begin{equation}
\partial I_{C}(\vec{\beta}^{+})=\left[
\begin{array}{c}
D \\
D \\
D%
\end{array}%
\right] ,\ with\ D\in \mathbb{R}^{+}.  \notag
\end{equation}%
We choose initial state
\begin{equation}
\beta _{1}^{-}=\beta _{2}^{-}=\frac{1}{2},~\beta _{3}^{-}=0,  \notag
\end{equation}%
which is an equilibrium when
\begin{equation*}
T^{-}\leq T_{0},
\end{equation*}%
that we assume.

We get from the first equation, subtracting line 1 from line 2 and
line 1 from line 3,
\begin{equation}
\beta _{1}^{+}=\beta _{2}^{+},~c(\beta _{3}^{+}-(\beta _{1}^{+}-\frac{1}{2}%
))=\frac{l_{a}}{T_{0}}(T^{+}-T_{0}).  \notag
\end{equation}
With the last equation, we have
\begin{equation}
c(\beta _{3}^{+}-(\frac{1}{2}-\frac{\beta _{3}^{+}}{2})+\frac{1}{2})=\frac{%
l_{a}}{T_{0}}(T^{+}-T_{0}).  \notag
\end{equation}
It results
\begin{equation}
\beta _{3}^{+}=\frac{2l_{a}}{3cT_{0}}(T^{+}-T_{0}).  \notag
\end{equation}
Note that if there is not dissipation, the temperature has to be equal to
the phase change temperature. From the second equation, we get
\begin{equation}
C(T^{+}-T^{-})+\frac{2l_{a}^{2}}{3cT_{0}}(T^{+}-T_{0})=\mathcal{T},  \notag
\end{equation}
giving
\begin{equation}
(C+\frac{2l_{a}^{2}}{3cT_{0}})T^{+}=\mathcal{T}+CT^{-}+\frac{2l_{a}^{2}}{3c}.
\notag
\end{equation}

Function $\mathcal{T\rightarrow }T^{+}$ is increasing. This is the solution
as long as
\begin{equation}
0\leq \beta _{3}^{+}\leq 1,  \notag
\end{equation}%
or
\begin{equation}
0\leq T^{+}-T_{0}\leq \frac{2l_{a}}{3cT_{0}}.  \notag
\end{equation}%
To satisfy these conditions, the dissipated work $\mathcal{T}$ has to verify
\begin{equation}
0\leq \mathcal{T}+C(T^{-}-T_{0})\leq \frac{3cCT_{0}+2l_{a}^{2}}{2l_{a}}.
\notag
\end{equation}%
In case there is dissipation, $c>0$, the three phases may coexist at
temperatures different from $T_{0}$ whereas the temperature has to be equal
to $T_{0}$ in case there is not dissipation, $c=0$. Of course, depending on
the temperature before collision, the dissipated work $\mathcal{T}$ has to
be not too small and not too large. The complete phase change occurs for a
dissipated work large enough. For a very weak hammer stroke there is only an
increase of temperature and no phase change.

\section{ The PDE System: Existence and Uniqueness of Solutions}

In this section we introduce some convex sets in order to eliminate one of
volume fractions of $\vec{\beta}^{+}$ and reformulate equation of motion for
the microscopic motions \eqref{equU}--\eqref{convex} with \eqref{thermal}
and with the associated boundary conditions. Then an existence and
uniqueness of solutions of the problem theorem is proved.

\subsection{The Problem}

The state
\begin{equation*}
\vec{\beta}^{-},\ T^{-},
\end{equation*}%
with $\func{grad}T^{-}=0$ and velocity%
\begin{equation*}
\vec{U}^{-}=0,
\end{equation*}%
before collision are given. The unknowns are%
\begin{equation*}
\vec{\beta}^{+},\ T^{+},\ \vec{U}^{+},
\end{equation*}%
the state and velocity after collision. The equation are

\begin{itemize}
\item the mass balance%
\begin{equation*}
(\beta _{1}+\beta _{2}+\beta _{3})^{+}=(\beta _{1}+\beta _{2}+\beta
_{3})^{-}=1,
\end{equation*}

\item the equation of motion for the macroscopic motion
\begin{gather*}
\rho \vec{U}^{+}-k_{v}\func{div}D(\vec{U}^{+})=0,\ in\;\Omega , \\
k_{v}\frac{\partial \vec{U}^{+}}{\partial N}=\vec{G}^{p},\ on\ \Gamma _{1},%
\vec{U}^{+}=\vec{U}^{-}=0,\ on\ \Gamma _{0},
\end{gather*}

\item the equation of motion for the microscopic motions%
\begin{gather*}
c\left[ \vec{\beta}\right] -\upsilon \Delta \left[ \vec{\beta}\right]
-k\Delta \vec{\beta}^{+}+\vec{B}_{reca}^{fe} \\
+\left\vert
\begin{array}{c}
-P-CT^{+}\log T^{+} \\
-P-CT^{+}\log T^{+} \\
-\frac{l_{a}}{T_{0}}(T^{+}-T_{0})-P-CT^{+}\log T^{+}%
\end{array}%
\right\vert =0,\ in\;\Omega , \\
\vec{B}_{reca}^{fe}\in \partial I_{C}(\vec{\beta}^{+}), \\
\upsilon \frac{\partial \left[ \vec{\beta}\right] }{\partial N}+k\frac{%
\partial \vec{\beta}^{+}}{\partial N}=0,\ on\ \partial \Omega \text{,}
\end{gather*}

\item the energy balance

\begin{gather*}
C\left[ T\right] +l_{a}\left[ \beta _{3}\right] -\lambda \Delta \underline{T}%
=2k_{v}\left( D(\frac{\vec{U}^{+}+\vec{U}^{-}}{2})\right) ^{2},\ in\;\Omega ,
\\
\frac{\partial \underline{T}}{\partial N}=0,\ on\ \partial \Omega .
\end{gather*}
\end{itemize}

\subsection{Some Convex Sets}

Let define convex set
\begin{equation}
\tilde {C} =\{(\gamma_{1},\gamma_{2},\gamma_{3})\in \mathbb{R}^{3};0\leq
\gamma_{i}\leq 1;\gamma_{1}+\gamma_{2}+\gamma_{3} = 1\}\,.  \label{defCtilde}
\end{equation}%
It is a plane triangle in $\mathbb{R}^3$, intersection of tetrahedron $C$
and plane $\gamma_{1}+\gamma_{2}+\gamma_{3} = 1$.

Let define function
\begin{equation}
\hat{I}_{0}(\vec{\gamma)}=I_{0}(\gamma_{1}+\gamma_{2}+\gamma_{3} - 1)
\end{equation}

We have
\begin{equation}
I_{\tilde {C}}=\hat{I}_{0}+I_{{C}},
\end{equation}

and

\begin{equation}
\partial I_{\tilde {C}} \supset \partial \hat{I}_{0}+ \partial I_{{C}},
\end{equation}

We have equality
\begin{equation}
\partial I_{\tilde{C}}=\partial \hat{I}_{0}+\partial I_{{C}},  \label{propC}
\end{equation}%
because the interior of convex set $C$ is not empty, \cite{moreau 1966}.

The plane triangle in $\mathbb{R}^{2}$
\begin{equation*}
K:=\left\{
\begin{array}{l|l}
\lbrack \gamma _{1},\gamma _{2}]\in \mathbb{R}^{2} & 0\leq \gamma
_{1},\,\gamma _{2}\leq 1,\ \gamma _{1}+\gamma _{2}\leq 1%
\end{array}%
\right\} .
\end{equation*}%
is a convex set. It is easy to prove%
\begin{equation}
\left\vert
\begin{array}{c}
D_{1} \\
D_{2} \\
D_{3}%
\end{array}%
\right\vert \in \partial I_{\tilde{C}}(\vec{\beta})\iff \left\vert
\begin{array}{c}
D_{2}-D_{1} \\
D_{3}-D_{1}%
\end{array}%
\right\vert \in \partial I_{K}(\beta _{2}^{+};\beta _{3}^{+}).  \label{propK}
\end{equation}

\subsection{The Equation of Motion for the Microscopic Motions}

By using relationship (\ref{propC}), the equations (\ref{equU})-(\ref{equaP}%
) become%
\begin{gather}
c\left[ \vec{\beta}\right] -\upsilon \Delta \left[ \vec{\beta}\right]
-k\Delta \vec{\beta}^{+}+\vec{B}_{reca}^{fe}+\left\vert
\begin{array}{c}
0 \\
0 \\
-\frac{l_{a}}{T_{0}}(T^{+}-T_{0})%
\end{array}%
\right\vert =0,\ in\;\Omega ,  \label{solv2bis} \\
\vec{B}_{reca}^{fe}\in \partial I_{\tilde{C}}(\vec{\beta}^{+}),  \notag \\
\upsilon \frac{\partial \left[ \vec{\beta}\right] }{\partial N}+k\frac{%
\partial \vec{\beta}^{+}}{\partial N}=0,\ on\ \partial \Omega \text{,}
\notag
\end{gather}

An easy computation using relationship (\ref{propK}) shows that system (\ref%
{solv2bis}) is equivalent to%
\begin{gather}
\left(
\begin{array}{cc}
2 & 1 \\
1 & 2%
\end{array}%
\right) \left\vert
\begin{array}{c}
c\left[ \beta _{2}\right] -\upsilon \Delta \left[ \beta _{2}\right] -k\Delta
\beta _{2}^{+} \\
c\left[ \beta _{3}\right] -\upsilon \Delta \left[ \beta _{3}%
\right] -k\Delta \beta _{3}^{+}%
\end{array}%
\right\vert +\left\vert
\begin{array}{c}
\tilde{D}_{2}^{reac} \\
\tilde{D}_{3}^{reac}%
\end{array}%
\right\vert +\left\vert
\begin{array}{c}
0 \\
-\frac{l_{a}}{T_{0}}(T^{+}-T_{0})%
\end{array}%
\right\vert =0,\ in\;\Omega ,  \notag \\
\left\vert
\begin{array}{c}
\tilde{D}_{2}^{reac} \\
\tilde{D}_{3}^{reac}%
\end{array}%
\right\vert \in \partial I_{K}(\beta _{2}^{+};\beta _{3}^{+}),  \notag \\
\upsilon \frac{\partial \left[ \beta _{2}\right] }{\partial N}+k\frac{%
\partial \beta _{2}^{+}}{\partial N}=0,\ \upsilon \frac{\partial \left[
\beta _{3}\right] }{\partial N}+k\frac{\partial \beta _{3}^{+}}{\partial N}%
=0,\ on\ \partial \Omega \text{,}  \label{nonclassic}
\end{gather}

\subsection{Another Equation of Motion for the Microscopic
Motions}

By choosing the more sophisticated interfacial energy (\ref{dif1}) and
pseudo-potential of dissipation (\ref{dif2}) with%
\begin{equation*}
k_{1}=0,~k_{2}=k_{3}=k,\ c_{1}=0,~c_{2}=c_{3}=c,\ \upsilon _{1}=0,~\upsilon
_{2}=\upsilon _{3}=\upsilon .
\end{equation*}%
Let us note that phase one still intervenes in the physical properties of
the alloy because there are no voids, see \cite{fremya1996}, \cite%
{fremond2011}
\begin{equation*}
\beta _{1}^{-}+\beta _{2}^{-}+\beta _{3}^{-}=\beta _{1}^{+}+\beta
_{2}^{+}+\beta _{3}^{+}=1.
\end{equation*}%
With these choices the equations giving the $\beta _{i}^{+}$ are%
\begin{gather*}
\left\vert
\begin{array}{c}
0 \\
c\left[ \beta _{2}\right] -\upsilon \Delta \left[ \beta _{2}\right] -k\Delta
\beta _{2}^{+} \\
c\left[ \beta _{3}\right] -\upsilon \Delta \left[ \beta _{3}%
\right] -k\Delta \beta _{3}^{+}%
\end{array}%
\right\vert +\left\vert
\begin{array}{c}
D_{1}^{reac} \\
D_{2}^{reac} \\
D_{3}^{reac}%
\end{array}%
\right\vert +\left\vert
\begin{array}{c}
0 \\
0 \\
-\frac{l_{a}}{T_{0}}(T^{+}-T_{0})%
\end{array}%
\right\vert =0,\ in\;\Omega , \\
\left\vert
\begin{array}{c}
D_{1}^{reac} \\
D_{2}^{reac} \\
D_{3}^{reac}%
\end{array}%
\right\vert \in \partial I_{\tilde{C}}(\vec{\beta}^{+}), \\
\upsilon \frac{\partial \left[ \beta _{2}\right] }{\partial N}+k\frac{%
\partial \beta _{2}^{+}}{\partial N}=0,\ \upsilon \frac{\partial \left[
\beta _{3}\right] }{\partial N}+k\frac{\partial \beta _{3}^{+}}{\partial N}%
=0,\ on\ \partial \Omega \text{.}
\end{gather*}%
By using relationship (\ref{propK}) we get%
\begin{gather}
\left\vert
\begin{array}{c}
c\left[ \beta _{2}\right] -\upsilon \Delta \left[ \beta _{2}\right] -k\Delta
\beta _{2}^{+} \\
c\left[ \beta _{3}\right] -\upsilon \Delta \left[ \beta _{3}%
\right] -k\Delta \beta _{3}^{+}%
\end{array}%
\right\vert +\left\vert
\begin{array}{c}
D_{2}^{reac} \\
D_{3}^{reac}%
\end{array}%
\right\vert +\left\vert
\begin{array}{c}
0 \\
-\frac{l_{a}}{T_{0}}(T^{+}-T_{0})%
\end{array}%
\right\vert =0,\ in\;\Omega ,  \notag \\
\left\vert
\begin{array}{c}
D_{2}^{reac} \\
D_{3}^{reac}%
\end{array}%
\right\vert \in \partial I_{K}(\beta _{2}^{+};\beta _{3}^{+}),  \notag \\
\upsilon \frac{\partial \left[ \beta _{2}\right] }{\partial N}+k\frac{%
\partial \beta _{2}^{+}}{\partial N}=0,\ \upsilon \frac{\partial \left[
\beta _{3}\right] }{\partial N}+k\frac{\partial \beta _{3}^{+}}{\partial N}%
=0,\ on\ \partial \Omega \text{,}  \label{classic}
\end{gather}

\subsection{The PDE\ System}

We introduce notations to make precise the mathematical formulation of the
problem and prove the existence and uniqueness theorem.

\subsubsection{Notation}

In order to give a precise formulation of our problem, let us denote by $%
\Omega $ a bounded, convex set in $\mathbb{R}^{3}$ with $C^{1}$ boundary $%
\Gamma $. Let $(\Gamma _{0},\Gamma _{1})$ be a partition of $\partial \Omega
$ into two measurable sets such that both $\Gamma _{0}$ has positive surface
measure and it's Lipschitz. Finally, we introduce the Hilbert triplet $%
(V,H,V^{\prime })$ where
\begin{equation}
H:=L^{2}(\Omega )\quad \hbox{and}\quad V:=W^{1,2}(\Omega )  \label{spazi}
\end{equation}%
and identify, as usual, $H$ (which stands either for the space $L^{2}(\Omega
)$ or for $(L^{2}(\Omega ))^{3}$) with its dual space $H^{\prime }$, so that
$V\hookrightarrow H\hookrightarrow V^{\prime }$ with dense and continuous
embeddings. Moreover, we denote by $\Vert \cdot \Vert _{X}$ the norm in some
space $X$ and by $\duav{\cdot,\cdot}$ the duality pairing between $V$ and $%
V^{\prime }$ and by $(\cdot ,\cdot )$ the scalar product in $H$. In the
space $V$ we introduce the inner product
\begin{equation*}
(u,v)_{V}:=(\grad u,\grad v)+\int_{\Gamma }u_{|_{\Gamma }}v_{|_{\Gamma }}%
\mbox{
\ for any $ u, v \in V $.}
\end{equation*}%
We define the Hilbert space
\begin{equation}
\mathbf{W}:=\{\mathbf{v}\in (V)^{3}:\,\mathbf{v}_{|_{\Gamma _{0}}}=\mathbf{0}%
\}  \label{defiW}
\end{equation}%
endowed with the usual norm. In addition, we introduce on $\mathbf{W}\times
\mathbf{W}$ a bilinear symmetric continuous form $a(\cdot ,\cdot )$ defined
by
\begin{equation}\notag
a(\ub,{\bf v}):=\int_\Omega \e(\ub):\e(\vb):= \sum_{i,j=1}^3\int_\Omega\e_{ij}(\ub)\e_{ij}({\bf v}).
\end{equation}
Note here that (since $\Gamma _{0}$ has positive measure), thanks to Korn's
inequality (cf., e.g., \cite{ciarlet}, \cite[p.~110]{DL}), there exists a
positive constant $c$ such that
\begin{equation}
a(\mathbf{v},\mathbf{v})\geq c\Vert \mathbf{v}\Vert _{\mathbf{W}}^{2}\quad
\forall \mathbf{v}\in \mathbf{W}.  \label{aequiv}
\end{equation}%
Moreover, we introduce the space
\begin{equation*}
W_{\Gamma _{0}}^{1,q}(\Omega ):=\{v\in W^{1,q}(\Omega )\,:\,v=0\quad %
\hbox{on }\Gamma _{0}\}
\end{equation*}%
and we do not distinguish in the notation the spaces $W^{1,p}(\Omega )$ and $%
(W^{1,p}(\Omega ))^{3}$ as well as the norms $\Vert \cdot \Vert _{1,p}$ and $%
\Vert \cdot \Vert _{p}$ which stand for the usual norms in $W^{1,p}(\Omega )$%
, $W^{1,p}(\Omega )^{3}$ and $L^{p}(\Omega $, $(L^{p}(\Omega ))^{3}$,
respectively.

We use equation (\ref{classic}) as equation of motion for the microscopic
motions. Then, choosing for simplicity and without any loss of generality %
$\rho =k_{v}=c=\upsilon +k=\lambda =1$, $l_{a}=T_{0}$, and $%
\beta _{2}^{-}=\beta _{3}^{-}=0$, $\vec{U}^{-}=0$, we can rewrite our
system, coupling \eqref{equU}, \eqref{convex}, \eqref{equaP}%
, \eqref{thermal}, and \eqref{classic}, in the new variables $(\vartheta ,%
\ub,\chi _{2},\chi _{3})$, corresponding to the previous $(T^{+},\vec{U}%
^{+},\beta _{2},\beta _{3})$, as
\begin{align}
& \vartheta +T_{0}\chi _{3}-\Delta \vartheta =f+\e(\ub):\e(\ub),\
\mbox{ a.e.~in
$\Omega $,}  \label{eteta} \\
& \left( \hspace{-0.15cm}%
\begin{array}{c}
\chi _{2} \\
\chi _{3}%
\end{array}%
\hspace{-0.15cm}\right) -\left( \hspace{-0.15cm}%
\begin{array}{c}
\Delta \chi _{2} \\
\Delta \chi _{3}%
\end{array}%
\hspace{-0.15cm}\right) +\partial I_{K}(\chi _{2},\chi _{3})\ni \left(
\hspace{-0.15cm}%
\begin{array}{c}
0 \\
\vartheta -T_{0}%
\end{array}%
\hspace{-0.15cm}\right) \!,\ \mbox{ a.e.~in $ \Omega $,}  \label{echi} \\
& \ub-\dive(\e(\ub))=0,\ \mbox{ a.e.~in $\Omega$,}  \label{equ} \\
& \dn\vartheta =\Pi ,\quad \mbox{a.e.~on }\Gamma , \\
& \dn\chi _{i}=h_{i},\ i=2,3,\quad \mbox{a.e.~on }\Gamma , \\
& \ub=0,\quad \mbox{a.e.~on }\Gamma _{0},  \label{bouu1} \\
& \e(\ub)\cdot \mathbf{n}=\mathbf{g},\ \mbox{a.e.~on }\Gamma _{1}.
\label{bouu2}
\end{align}%
Notice that $\e_{ij}(\ub):=(\partial _{x_{j}}u_{i}+\partial
_{x_{i}}u_{j})/2, $ $i,j=1,2,3,$ are the components of the standard
linearized \textsl{strain tensor} $\e(\ub)$ and $\mathbf{n}$ stands for the
outward normal vector to $\Gamma .$ Concerning data, $f:\Omega \rightarrow
\mathbb{R}$ represents a known source term (it was $f=-\Delta
T^{-}+T^{-}+T_{0}\beta _{3}^{-}$ in the previous sections), $h_{i}:\Omega
\rightarrow \mathbb{R}$, $i=2,3$ are given boundary data (it was %
$h_{i}=\upsilon \dn\beta _{i}^{-}$ in previous sections and so
we should have $h_{i}=0$ here, but we prefer to let it be more general,
different from 0), $\Pi :\Gamma \rightarrow \mathbb{R}$ denotes an energy
flux coming from the exterior of the system (it was $\Pi =-\dn T^{-}$ in the
previous sections), and $\mathbf{g}:\Gamma _{1}\rightarrow \mathbb{R}^{3}$
yields the external contact force applied to $\Gamma _{1}$ (it was $\mathbf{g%
}=\vec{G}^{p}$ in the previous sections). Moreover the maximal monotone
graph $\partial I_{K},$ representing the subdifferential of the indicator
function $I_{K}$ of the plane triangle $K$. Set $K$ is convex and contains
the admissible phase proportions. We also notice that $I_{K}(\chi _{2},\chi
_{3})=0$ if $[\chi _{2},\chi _{3}]\in K,$ \ $=+\infty $ otherwise. For
definitions and basic properties of maximal monotone operators and
subdifferentials of convex functions, we refer, for instance, to \cite{Br},
\cite{moreau 1966}.

Let us comment now on the fact that, with the help of the usually considered
boundary conditions (\ref{bouu1})--(\ref{bouu2}) (see, e.g., \cite{CFV}),
from \eqref{equ} it turns out that (cf.~ \cite[Thm.~6.2, p.~168]{DL} and
\cite[Thm.~1.1, p.~437]{SW94}), at almost any time $t,$ the velocity
$\ub$ can be completely determined in terms of the datum $\mathbf{g}$. Thus, we
may introduce the operator $F_{\mathbf{g}}$, which maps $\mathbf{g}$ into $\e%
(\ub):\e(\ub)$ where $\mathbf{u}$ stands for the related solution of %
\eqref{equ}, (\ref{bouu1})--(\ref{bouu2}). Then, you consider the following
system, denoted by $(SMA)$, whose unknowns are now the absolute temperature $%
\teta$ and the phase variables~$\chi_2,\chi_3$.
\begin{align}
&\vartheta +T_0\chi_3 -\Delta\vartheta = f+F_{\mathbf{g}}, \
\mbox{ a.e.~in
$\Omega $,}  \tag{$SMA_1$}  \label{equteta} \\
&\left( \hspace{-0.15cm}
\begin{array}{c}
\chi_2 \\
\chi_3%
\end{array}
\hspace{-0.15cm} \right)-\left( \hspace{-0.15cm}
\begin{array}{c}
\Delta\chi_1 \\
\Delta\chi_2%
\end{array}
\hspace{-0.15cm} \right) +\left( \hspace{-0.15cm}
\begin{array}{c}
\xi_2 \\
\xi_3%
\end{array}
\hspace{-0.15cm} \right) = \left( \hspace{-0.15cm}
\begin{array}{c}
0 \\
\vartheta-T_0%
\end{array}
\hspace{-0.15cm} \right)\!, \ \mbox{ a.e.~in $ \Omega $,}  \tag{$SMA_2$}
\label{equchi} \\
&\left( \hspace{-0.15cm}
\begin{array}{c}
\xi_2 \\
\xi_3%
\end{array}
\hspace{-0.15cm} \right)\in \partial I_K(\chi_2, \chi_3)\
\mbox{ a.e.~in $
\Omega $,}  \tag{$SMA_3$}  \label{indica} \\
&\dn\vartheta=\Pi,\quad\mbox{a.e.~on }\Gamma,  \tag{$SMA_4$}  \label{bouteta}
\\
&\dn\chi_i = h_i, \ i = 2, 3, \quad\mbox{a.e.~on }\Gamma.  \tag{$SMA_5$}
\label{bouchi}
\end{align}

\begin{remark}
\label{exte} Let us note here that in this formulation of
our system we consider just the case $\beta _{2}^{-}=\beta _{3}^{-}=0$, $%
\vec{U}^{-}=0$ for simplicity, however our existence and uniqueness results
could be extended to the case these quantities are different from zero
sufficiently regular. Moreover, we could handle the case where inclusion %
\eqref{classic} is replaced by \eqref{nonclassic} in the same way due to the
fact that the only difference would be having a positive definite matrix
acting on $(\chi _{2},\chi _{3})$ instead of the identity matrix, that could
be treated in a very similar manner. 

In case {$\vec{U}^{-}\neq 0$ with }$D(${$\vec{U}^{-})\neq 0$, we choose a
pseudo-potential of dissipation satisfying}\begin{eqnarray*}
&&\left( 0,(B_{i}^{p}-B_{i}^{fe}),(\vec{H}_{i}^{p}-\vec{H}_{i}^{fe}),-2\vec{Q}_{p}\right)  \\
&\in &\partial \Phi (D(\frac{\vec{U}^{-}}{2}),\left[ \beta _{i}\right] ,\mathbf{grad}\left[ \beta _{i}\right] ,\mathbf{grad}\underline{T},\underline{T}),
\end{eqnarray*}implying that there is no collision if there is no hammer stroke, $\vec{G}^{p}=0$. Nevertheless the schematic simple pseudo-potential of dissipation (\ref{pseudo}) is adapted to account for what occurs for large hammer
strokes, i.e., for large $D(\vec{U}^{+}+\vec{U}^{-})/2$.
\end{remark}

\subsection{The Existence and Uniqueness result}

Let us first make precise the statement regarding the operator $F_{\mathbf{g}%
}$, whose proof can be found as result of \cite[Thm.~6.2, p.~168]{DL}, a
slight modification of \cite[Thm.~1.1, p.~437]{SW94}, and \cite[Thm.\
6.3-.6, p.\ 296]{ciarlet} (cf.\ also \cite[p.\ 260]{necas}).

\begin{lemma}
\label{ureg} (Higher integrability of the gradient) Given $\mathbf{g}\in
L^2(\Gamma_1)^3)$, there exists a unique $\ub\in \mathbf{W}$ \emph{weak
solution} of
\begin{equation}  \label{equweak}
a(\ub, \vb)=\int_{\Gamma_1}\mathbf{g}\cdot \vb_{|\Gamma_1}\quad \forall \vb%
\in \mathbf{W}.
\end{equation}
Moreover, there exists $\gamma>0$ such that the following statement holds:
whenever $2<p<2+\gamma$, $\mathbf{g}\in (L^{2p/3}(\Gamma_1))^3,$, and $\ub$
is a \emph{weak solution} of \eqref{equweak}, then $\ub\in
(W^{1,p}(\Omega))^3$ and
\begin{equation*}
\|\ub\|_{1,p}\leq C\|\mathbf{g}\|_{L^{2p/3}(\Gamma_1)},
\end{equation*}
where $\gamma$ depends only on $\Omega$, $\Gamma_0$, and $C$ depends only on
$p$ and these quantities. Hence, if we denote by $F_{\mathbf{g}}$ the
operator
\begin{equation*}
F_{\mathbf{g}}\,:\, \mathbf{g}\in (L^{2p/3}(\Gamma_1))^3\mapsto \e(\ub):\e(%
\ub) \in L^{p/2}(\Omega),
\end{equation*}
where $\ub$ denotes the unique \emph{weak solution} of \eqref{equweak}
corresponding to $\mathbf{g}$, then, there holds
\begin{equation}  \label{estiF}
\|F_{\mathbf{g}}\|_{p/2}\leq C\|\mathbf{g}\|_{(L^{2p/3}(\Gamma_1))^3},
\end{equation}
where $C$ deoends on $\Omega$, $\Gamma_0$, and $p$. Finally, if $\Omega$ is
a $C^2$ domain, the closures of $\Gamma_0$ and $\Gamma_1$ do not intersect
and $\mathbf{g}\in (L^2(\Gamma_1))^3$, then $\ub\in (H^2(\Omega))^3$ and the
following two estimates hold true
\begin{align}  \label{cdu}
&\| \ub \|_{(H^2(\Omega))^3} \leq C_1\|\mathbf{g}\|_{(L^2(\Gamma_1))^3}\,, \\
&\|F_{\mathbf{g}_1}-F_{\mathbf{g}_2}\|_{4/3}\leq C_2\|\mathbf{g}_1-\mathbf{%
g_2}\|_{(L^2(\Gamma_1))^3},\,
\end{align}
for every $\mathbf{g}_i\in (L^2(\Gamma_1))^3$, $i=1,2$, where the constant $%
C_2$ depends on the problem data and also on the $\| \ub \|_{(H^2(%
\Omega))^3} $.
\end{lemma}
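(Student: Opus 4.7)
The plan is to decompose the lemma into four successive claims and prove each with a standard tool, then combine estimates.

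First I would handle existence and uniqueness of the weak solution in $\mathbf{W}$ via Lax--Milgram applied to the bilinear form $a(\cdot,\cdot)$ on $\mathbf{W}\times \mathbf{W}$. The continuity of $a$ is immediate from Cauchy--Schwarz. Coercivity is exactly the content of \eqref{aequiv}, which follows from Korn's inequality since $\Gamma_0$ has positive surface measure. The right-hand side $\vb\mapsto \int_{\Gamma_1}\mathbf{g}\cdot \vb_{|\Gamma_1}$ is continuous on $\mathbf{W}$ by the continuous trace embedding $V\hookrightarrow L^2(\Gamma)$ combined with Cauchy--Schwarz, giving the bound in terms of $\|\mathbf{g}\|_{L^2(\Gamma_1)}$. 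Lax--Milgram then yields the unique $\ub\in \mathbf{W}$.

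Second, for the higher integrability part, I would invoke Meyers' theorem (a reverse H\"older inequality argument via Gehring's lemma). The key observation is that for weak solutions of linear elliptic systems in divergence form with bounded measurable coefficients (here the constant elasticity tensor), one has an a priori self-improving bound on $\grad \ub$: there exists $\gamma>0$ depending only on $\Omega$ and the ellipticity constant such that $\grad\ub\in L^p_{\rm loc}$ for $2<p<2+\gamma$, with global control up to $\Gamma_1$ supplied by the Neumann datum $\mathbf{g}\in L^{2p/3}(\Gamma_1)$ (the exponent $2p/3$ comes from the trace theory for $W^{1,p}(\Omega)$ in dimension three, since the trace of $W^{1,p}$ lies in $W^{1-1/p,p}(\Gamma)\hookrightarrow L^{2p/3}(\Gamma)$ at the critical scaling). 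This is precisely the modification of \cite[Thm.~1.1]{SW94} alluded to in the statement, and it yields the bound $\|\ub\|_{1,p}\leq C\|\mathbf{g}\|_{L^{2p/3}(\Gamma_1)}$. From $\ub\in W^{1,p}$ the pointwise product $\e(\ub):\e(\ub)$ then lies in $L^{p/2}$ with norm controlled by $\|\ub\|_{1,p}^2$, and composing with the map $\mathbf{g}\mapsto \ub$ which is linear one obtains \eqref{estiF}.

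Third, for the $H^2$-regularity statement under the additional assumptions ($\Omega$ of class $C^2$ and $\overline{\Gamma_0}\cap\overline{\Gamma_1}=\emptyset$), I would apply the Agmon--Douglis--Nirenberg elliptic regularity for mixed Dirichlet/Neumann boundary conditions in linear elasticity, as found in \cite[Thm.~6.3-6]{ciarlet}. The separation of $\overline{\Gamma_0}$ and $\overline{\Gamma_1}$ avoids singularities at the interface and allows full $H^2$ regularity, giving \eqref{cdu}.

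Finally, for the Lipschitz-type estimate on $F_{\mathbf{g}}$, I would write
\begin{equation*}
F_{\mathbf{g}_1}-F_{\mathbf{g}_2}=\e(\ub_1):\e(\ub_1)-\e(\ub_2):\e(\ub_2)=\e(\ub_1+\ub_2):\e(\ub_1-\ub_2),
\end{equation*}
and apply H\"older's inequality with exponents $4$ and $2$. The factor $\e(\ub_1-\ub_2)$ is controlled in $L^2$ by $\|\mathbf{g}_1-\mathbf{g}_2\|_{L^2(\Gamma_1)}$ via the linear energy estimate from Lax--Milgram. The factor $\e(\ub_1+\ub_2)$ is controlled in $L^4$ by the Sobolev embedding $H^2(\Omega)\hookrightarrow W^{1,4}(\Omega)$ in dimension three, which applied to \eqref{cdu} yields a bound depending on $\|\mathbf{g}_1\|_{L^2}+\|\mathbf{g}_2\|_{L^2}$ (absorbed in the constant $C_2$ via the $\|\ub\|_{H^2}$ dependence). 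The resulting exponent is $1/4+1/2=3/4$, i.e.~the $L^{4/3}$-estimate.

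The step I expect to be the main obstacle is the Meyers higher integrability in step two: it is the only genuinely nontrivial piece, since the reverse H\"older inequality up to the boundary, the interpolation of trace spaces, and the correct choice of the exponent $2p/3$ must be carefully matched. The other three steps are essentially applications of classical theorems once the correct functional framework is in place.
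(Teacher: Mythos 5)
Your proposal is correct and takes essentially the same route as the paper, whose own ``proof'' consists precisely of the citations you invoke (Duvaut--Lions/Lax--Milgram with Korn for existence and uniqueness, the Shi--Wright Meyers-type higher integrability, Ciarlet's mixed-boundary $H^2$ regularity under the separation of $\overline{\Gamma_0}$ and $\overline{\Gamma_1}$) together with the same factorization--H\"older/Young--energy-stability argument for the $L^{4/3}$ Lipschitz estimate sketched in the paper's subsequent remark. The only inaccuracy is your heuristic for the exponent $2p/3$: the trace of $W^{1,p}(\Omega)$ embeds into $L^{2p/(3-p)}(\Gamma)$ rather than $L^{2p/3}(\Gamma)$, the exponent $2p/3$ arising instead by duality with the trace space of $W^{1,p^{\prime}}(\Omega)$; since the higher-integrability step is in any case taken from the cited theorem, this slip does not affect the argument.
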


\begin{remark}
\label{ugen} Let us note that we could actually generalize here the form of
the elasticity bilinear form $a$ including an elasticity matrix which do not
need to be exactly the identity matrix but it needs to be a symmetric
matrix, positive definite with $L^\infty$ coefficients. Moreover we could
include a non-zero volume percussion on the right hand side in \eqref{equweak}.
The regularity requirement we would need in order to apply the results of
\cite{SW94} is $L^p(\Omega)^3$. In fact, also the third part of the Lemma
extends to the case of an anisotropic and inhomogeneous material, for which
the elasticity tensors $\mathrm{R}_e$ is of the form $\mathrm{R}%
_e=(g_{ijkh}) $, with functions
\begin{equation}  \label{funz_g-l}
g_{ijkh} \in \mathrm{C}^{1}(\Omega)\,, \quad i,j,k,h=1,2,3,
\end{equation}
satisfying the classical symmetry and ellipticity conditions (with the usual
summation convention)
\begin{equation}  \label{ellipticity}
\begin{array}{ll}
& \!\!\!\!\!\!\!\! \!\!\!\!\!\!\!\! g_{ijkh}=g_{jikh}=g_{khij}, \quad
i,j,k,h=1,2,3 \\
& \!\!\!\!\!\!\!\! \!\!\!\!\!\!\!\! \exists \, C_1>0 \,: \, g_{ijkh}
\xi_{ij}\xi_{kh}\geq C_1\xi_{ij}\xi_{ij}, \, \text{for all } \xi_{ij}\colon
\xi_{ij}= \xi_{ji}\,,\, i,j=1,2,3\,.%
\end{array}%
\end{equation}
Notice moreover that in the latter part of Lemma~\ref{ureg} the
compatibility condition between $\Gamma_0$ and $\Gamma_1$ is necessary.
Indeed, without the latter geometric condition, the elliptic regularity
results ensuring the (crucial) $(H^2(\Omega))^3$-regularity of $\ub$ may
fail to hold, see~\cite[Chap.~VI,~Sec.~6.3]{ciarlet}. In particular, the
last estimate \eqref{cdu}, which will be used in order to prove the
stability estimate \eqref{stability}, is obtained by means of Young
inequality and the standard stability estimate for $\ub$
\begin{equation*}
\|\ub_1-\ub_2\|_{\mathbf{W}}\leq C\|\mathbf{g}_1-\mathbf{g_2}%
\|_{(L^2(\Gamma_1))^3},
\end{equation*}
which can be proved by testing the diferences of \eqref{equweak}'s by $\ub_1-%
\ub_2$.
\end{remark}

Now we are in the position to state and prove our existence and uniqueness
result

\begin{theorem}
\label{main} (Existence and Uniqueness) Assume that $T_0\in \mathbb{R}$,
there exists $\gamma>0$ such that $\mathbf{g}\in(L^{2p/3}(\Gamma_1))^3$ for
some $2<p<2+\gamma$, $\Pi\in W^{1-1/p,p}(\Gamma)$, $h_i\in W^{1/2,
2}(\Gamma) $, $i=2,3$, $f\in L^2(\Omega)$. Then there exists a unique
solution $(\teta, \chi_2, \chi_3)\in (H^1(\Omega)\cap
W^{2,p/2}(\Omega))\times H^2(\Omega)\times H^2(\Omega)$ of system (SMA) such
such that 
\begin{equation*}
\|\teta\|_{W^{2,p/2}(\Omega)}+\sum_{i=2}^3\|\chi_i\|_{H^2(\Omega)}\leq C,
\end{equation*}
where $C$ depends only on the problem data and on $p$. Finally, if $\Omega$
is a $C^2$ domain, the closures of $\Gamma_0$ and $\Gamma_1$ do not
intersect and $\mathbf{g}_i\in (L^2(\Gamma_1))^3$, $f_i\in H$, $(h_i)_1, \,
(h_i)_2\in L^2(\Gamma)$, $i=1,2$, then the following stability estimate
holds true 
\begin{equation}  \label{stability}
\|\teta_1-\teta_2\|_V+\sum_{i=2}^3\|(\chi_i)_1-(\chi_i)_2\|_{V}\leq C\left(
\|\mathbf{g}_1-\mathbf{g}_2\|_{(L^2(\Gamma_1))^3}+
\sum_{i=2}^3\|(h_i)_1-(h_i)_2\|_{L^2(\Gamma)}+\|f_1-f_2\|_H\right),
\end{equation}
where $(\teta_i, (\chi_2)_i, (\chi_3)_i)$ are two solutions corresponding to
data $\mathbf{g}_i\in (L^2(\Gamma_1))^3$, $f_i\in H$, $(h_i)_1, \,
(h_i)_2\in L^2(\Gamma)$, $i=1,2$, respectively, and the constant $C$ depends
only on the data of the problem.
\end{theorem}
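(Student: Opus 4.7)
By Lemma~\ref{ureg}, the velocity $\ub\in(W^{1,p}(\Omega))^{3}$ is entirely determined by $\mathbf{g}$, and $F_{\mathbf{g}}=\e(\ub):\e(\ub)\in L^{p/2}(\Omega)$ can then be treated as a known source term. It therefore suffices to solve system $(SMA)$ for $(\vartheta,\chi_{2},\chi_{3})$, in which the only coupling between the scalar elliptic equation for $\vartheta$ and the two-obstacle inequality for $(\chi_{2},\chi_{3})$ is linear and occurs through $\chi_{3}$ on the right-hand side of \eqref{equteta} and through $\vartheta$ on the right-hand side of \eqref{equchi}.

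\textbf{Existence via a compact fixed-point argument.} The plan is to define a map $\Lambda$ on $L^{2}(\Omega)$ by $\bar\chi_{3}\mapsto\chi_{3}$ in two steps. First, solve the linear Neumann problem
\[\vartheta-\Delta\vartheta=f+F_{\mathbf{g}}-T_{0}\bar\chi_{3}\ \text{in}\ \Omega,\qquad\dn\vartheta=\Pi\ \text{on}\ \Gamma;\]
since the source lies in $L^{p/2}(\Omega)$ and $\Pi\in W^{1-1/p,p}(\Gamma)$, $L^{p}$-regularity for the Neumann problem on the convex $C^{1}$-domain $\Omega$ delivers $\vartheta\in W^{2,p/2}(\Omega)$ with the required estimate. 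Given this $\vartheta$, next solve the obstacle-type variational inequality generated by \eqref{equchi}--\eqref{indica} with Neumann data $h_{i}$: find $(\chi_{2},\chi_{3})\in(H^{1}(\Omega))^{2}$ with values in $K$ almost everywhere. Existence and uniqueness in $(H^{1}(\Omega))^{2}$ follow from the Lions--Stampacchia theorem applied to the coercive symmetric form built from the identity and the Dirichlet form; the classical regularity theory for obstacle problems on bounded convex polygonal sets then lifts the solution to $(H^{2}(\Omega))^{2}$. Since $K$ is bounded, $\|\chi_{i}\|_{\infty}\leq 1$ automatically, so a priori bounds are free and $\Lambda$ sends $L^{2}(\Omega)$ into a fixed bounded subset of $H^{2}(\Omega)$. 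The compact embedding $H^{2}(\Omega)\hookrightarrow L^{2}(\Omega)$ and the continuity of both solution maps make $\Lambda$ continuous and compact, so Schauder's fixed-point theorem produces a solution with the stated regularity.

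\textbf{Uniqueness and stability.} For two solutions $(\vartheta_{j},\chi_{2,j},\chi_{3,j})$, $j=1,2$, associated with data $(f_{j},\mathbf{g}_{j},\Pi_{j},(h_{i})_{j})$, I would form the difference equations, test the $\vartheta$-difference equation by $\vartheta_{1}-\vartheta_{2}$ and the $(\chi_{2},\chi_{3})$-difference equation by $(\chi_{2,1}-\chi_{2,2},\chi_{3,1}-\chi_{3,2})$. Multiplying the second identity by $T_{0}$ and adding it to the first, the bilinear cross-coupling between $\vartheta_{1}-\vartheta_{2}$ and $\chi_{3,1}-\chi_{3,2}$ cancels exactly, while monotonicity of $\partial I_{K}$ makes the subdifferential contribution $T_{0}\int(\xi_{1}-\xi_{2})\cdot((\chi_{2,1},\chi_{3,1})-(\chi_{2,2},\chi_{3,2}))$ non-negative and hence discardable. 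This leaves
\[\|\vartheta_{1}-\vartheta_{2}\|_{V}^{2}+T_{0}\sum_{i=2}^{3}\|\chi_{i,1}-\chi_{i,2}\|_{V}^{2}\leq\text{(elementary data terms)}+\int_{\Omega}(F_{\mathbf{g}_{1}}-F_{\mathbf{g}_{2}})(\vartheta_{1}-\vartheta_{2}).\]
The elementary terms are controlled by Cauchy--Schwarz and trace inequalities, while the last term is bounded by H\"older's inequality, the three-dimensional Sobolev embedding $V\hookrightarrow L^{4}(\Omega)$, and the Lipschitz estimate $\|F_{\mathbf{g}_{1}}-F_{\mathbf{g}_{2}}\|_{4/3}\leq C\|\mathbf{g}_{1}-\mathbf{g}_{2}\|_{(L^{2}(\Gamma_{1}))^{3}}$ from the final part of Lemma~\ref{ureg}; a Young absorption closes the estimate and yields \eqref{stability}, and uniqueness follows by taking identical data.

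\textbf{Main obstacle.} The pivotal structural point is the exact cancellation of the $\vartheta$--$\chi_{3}$ cross term in the combined test, which is what makes both the a priori estimate at the fixed-point stage and the stability bound work without any smallness on the data. The other delicate ingredients are the $H^{2}$-regularity of the two-obstacle variational inequality on the polygonal convex set $K$, which is what makes the Schauder step compact, and the full $(H^{2}(\Omega))^{3}$-regularity of $\ub$ furnished by the last part of Lemma~\ref{ureg} under the geometric separation between $\Gamma_{0}$ and $\Gamma_{1}$, without which the $L^{4/3}$-Lipschitz control of $F_{\mathbf{g}}$ needed in \eqref{stability} would be unavailable.
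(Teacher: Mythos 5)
Your proposal follows essentially the same route as the paper: a Schauder fixed-point argument built on the decoupled elliptic solves (first the linear Neumann problem for $\vartheta$ with $W^{2,p/2}$-regularity, then the $\partial I_K$-inclusion with $H^2$-regularity, compactness via $H^2\hookrightarrow\hookrightarrow L^2$ and the free $L^\infty$-bound from $K$), and uniqueness/stability by testing the difference equations, exploiting the exact cancellation of the $\vartheta$--$\chi_3$ cross term together with the monotonicity of $\partial I_K$, and closing with H\"older, $V\hookrightarrow L^4$, the Lipschitz bound on $F_{\mathbf{g}}$ from Lemma~\ref{ureg}, and Young's inequality. The only cosmetic difference is that you parametrize the fixed-point map by $\bar\chi_3$ alone on all of $L^2(\Omega)$ (it should be restricted to a bounded convex invariant set, e.g.\ functions with values in $[0,1]$, exactly as the paper restricts to $(L^2\times L^2)\cap K$), and you make the $T_0$-weighting in the combined test explicit, which the paper leaves implicit.
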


\paragraph{Proof of Theorem~\protect\ref{main}.}

The main idea of the proof of existence of solutions is to use a fixed point
argument (Shauder theorem), that is, to prove that a suitable operator
\begin{equation*}
D\,:\,L^{2}(\Omega )\times L^{2}(\Omega )\rightarrow H^{2}(\Omega )\times
H^{2}(\Omega )\hookrightarrow \hookrightarrow L^{2}(\Omega )\times
L^{2}(\Omega )
\end{equation*}%
admits at least a fixed point. To do that we employ a standard Shauder
theorem by proving that the operator $D$ is continuous and compact. First we
fix $(\tilde{\chi}_{2},\tilde{\chi}_{3})\in (L^{2}(\Omega )\times
L^{2}(\Omega ))\cap K$ and we substitute $\tilde{\chi}_{3}$ in  %
\eqref{equteta} in place of $\chi _{3}$ and we find (cf. \cite[Ch.~2]{gris}%
) a unique $\tilde{\teta}\in V\cap W^{2,p/2}(\Omega )$, $\tilde{\teta}%
=D_{1}((\tilde{\chi}_{2},\tilde{\chi}_{3}))$ solution of %
\eqref{equteta} coupled with \eqref{bouteta} and since $|%
\tilde{\chi}_{3}|\leq C$, then $\Vert \tilde{\teta}\Vert _{V\cap
W^{2,p/2}(\Omega )}\leq S_{1}$ independently of $\tilde{\chi}_{3}$. Consider
now the differential inclusion \eqref{equchi}, %
\eqref{indica} with $\tilde{\teta}$ in place of $\teta$. Then, by applying
known regularity results (cf., e.g., \cite{BCP}), we have that there exists
a unique solution $(\chi _{2},\chi _{3})=D_{2}(\tilde{\teta})\in
H^{2}(\Omega )\times H^{2}(\Omega )$ and 
\begin{equation*}
\begin{array}{r}
\displaystyle\sum_{i=2}^{3}\left\{ \Vert \chi _{i}\Vert _{H^{2}(\Omega
)}^{2}+\Vert \xi _{i}\Vert _{H}^{2}\right\} \leq S_{2}(1+\Vert \vartheta
\Vert _{H}^{2})\leq S_{3},%
\end{array}%
\end{equation*}
where $S_{2}$ and $S_{3}$ depend only on the data of the problem, but not on
the choice of $(\tilde{\chi}_{2},\tilde{\chi}_{3})$. From the above argument
it follows that the operator $D::\,L^{2}(\Omega )\times L^{2}(\Omega
)\rightarrow H^{2}(\Omega )\times H^{2}(\Omega )$ specified by $D=D_{2}\circ
D_{1}$ is a compact operator in $H\times H$. It remains to prove that the
operator is continuous, but this follows from the following estimate. Let $%
\teta_{i}=D_{1}((\tilde{\chi}_{2})_{i}),(\tilde{\chi}_{3})_{i})$, $i=1,2$,
then, we easily get
\begin{equation*}
\Vert \teta_{1}-\teta_{2}\Vert _{V}^{2}\leq C\Vert (\tilde{\chi}_{3})_{1}-(%
\tilde{\chi}_{3})_{2}\Vert _{H}^{2}.
\end{equation*}%
Moreover, by monotonicity arguments, letting $((\chi _{2})_{i},(\chi
_{3})_{i})=D_{2}(\teta_{i})$, $i=1,2$, we get 
\begin{equation*}
\sum_{i=2}^{3}\Vert (\chi _{i})_{1}-(\chi _{i})_{2}\Vert _{V}^{2}\leq C\Vert %
\teta_{1}-\teta_{2}\Vert _{H}^{2}\leq C\Vert (\tilde{\chi}_{3})_{1}-(\tilde{%
\chi}_{3})_{2}\Vert _{H}^{2}
\end{equation*}
which implies the continuity of the operator $D$. Uniqueness of solutions
follows by taking the differences of two equations %
\eqref{equteta} and \eqref{equchi} and testing the first by
the differences of $\teta$'s and the second by differences of $(\chi
_{2},\chi _{3})$'s, solutions associated to the same data. This, thanks to a
cancellation and to monotonicity arguments leads to the estimate 
\begin{equation*}
\sum_{i=2}^{3}\Vert (\chi _{i})_{1}-(\chi _{i})_{2}\Vert _{V}^{2}+\Vert \teta%
_{1}-\teta_{2}\Vert _{V}^{2}\leq 0.
\end{equation*}
This concludes the proof of the first part of the Theorem. The stability
estimate \eqref{stability} follows from the following estimate. We take the
differences of the two equations \eqref{equteta} and %
\eqref{equchi} corresponding to the two solutions $(\teta%
_{i},(\chi _{2})_{i},(\chi _{3})_{i})$, $i=1,2$, and test them by $\teta_{1}-%
\teta_{2}$ and $((\chi _{2})_{1}-(\chi _{2})_{2},(\chi _{3})_{1}-(\chi
_{3})_{2})$, respectively. Then summing up the two resulting equations, we
get 
\begin{align}
\Vert \teta_{1}-\teta_{2}\Vert _{V}^{2}+\sum_{i=2}^{3}\Vert (\chi
_{i})_{1}-(\chi _{i})_{2}\Vert _{V}^{2}& \leq C\left( \sum_{i=2}^{3}\Vert
(h_{i})_{1}-(h_{i})_{2}\Vert _{L^{2}(\Gamma )}^{2}+\Vert f_{1}-f_{2}\Vert
_{H}^{2}\right)   \notag \\
& \quad +\Vert F_{\mathbf{g}_{1}}-F_{\mathbf{g}_{2}}\Vert _{4/3}\Vert \teta%
_{1}-\teta_{2}\Vert _{4}  \notag \\
& \leq C\left( \sum_{i=2}^{3}\Vert (h_{i})_{1}-(h_{i})_{2}\Vert
_{L^{2}(\Gamma )}^{2}+\Vert f_{1}-f_{2}\Vert _{H}^{2}+\Vert F_{\mathbf{g}%
_{1}}-F_{\mathbf{g}_{2}}\Vert _{4/3}^{2}\right)   \notag \\
& \quad +\frac{1}{2}\Vert \teta_{1}-\teta_{2}\Vert _{V}^{2}.  \notag
\end{align}
Using now \eqref{cdu}, we obtain exactly \eqref{stability}. This concludes
the proof of Theorem~\ref{main}.

\section{Numerical Examples}

In order to obtain numerical results for engineering applications, the
proposed predictive theory has been also implemented in a computational
framework.

The thermomechanical state $(\vec{U}^+, \vec{\beta}^+, T^+)$ after the
collision is obtained by solving Eqs. (\ref{equU}), (\ref{equatemp}) and (%
\ref{thermal}) by means of the finite element method. The post-collision
velocity $\vec{U}^+$ is obtained from Eq. (\ref{equU}). Based on this
solution, the post-collision alloy composition $\vec{\beta}^+$ and
temperature $T^+$ are obtained by solving the coupled Eqs. (\ref{equatemp})
and (\ref{thermal}) through an iterative numerical scheme \cite{mar2013}.

Moreover, the evolution of the thermomechanical state of the structure after
the collision is also predicted. To reach this goal, the constitutive model
for SMA smooth evolution presented in \cite{freroc2009,fremond2011}, and
enriched in \cite{mar2013,mar2014}, has been employed. The model accounts
for the typical shape-memory (i.e., thermal induced transformations) and
pseudoelastic (i.e., stress-induced transformations) effects in SMAs. It is
based on a set of solving equations analogous to Eqs. (\ref{equU}), (\ref%
{equatemp}) and (\ref{thermal}), but formulated under smooth evolution
assumptions. Hence, employed equations allow to compute structure
displacements, as well as the evolution of alloy composition and of the
temperature field, taking the post-collision state $(\vec{U}^+, \vec{\beta}%
^+, T^+)$ as initial condition. The solution strategy for the
post-collision evolution is based on an incremental algorithm which employs
an explicit Euler time discretization (i.e., an updated-Lagrangian
formulation) and a finite-element spatial discretization.

Model parameters are chosen referring to a Ni-Ti alloy: $\rho = 6500$ kg/m$%
^3 $, $l_a = 80$ MJ/(m$^3$), $C = 5.4$ MJ/(m$^3$K), $\lambda = 18$ Ws/(Km),
and $T_o = 332.75$ K \cite{messner2003}. The finite-element discretizations
employ quadratic Lagrange basis functions and a mesh element size equal
about to one cent of the structure maximum size. 

In agreement with the theoretical framework previously described, the collision is assumed to
be adiabatic. Addressing the thermal problem in the smooth evolution after the collision, a convective heat transfer is prescribed on the boundary, with convection coefficient equal to $100$ W/(m$^2$K) and external temperature equal to $T_{ext} = 0.9 T_o$.

\begin{figure}[tbp]
\centering
\includegraphics[width = 0.9\textwidth]{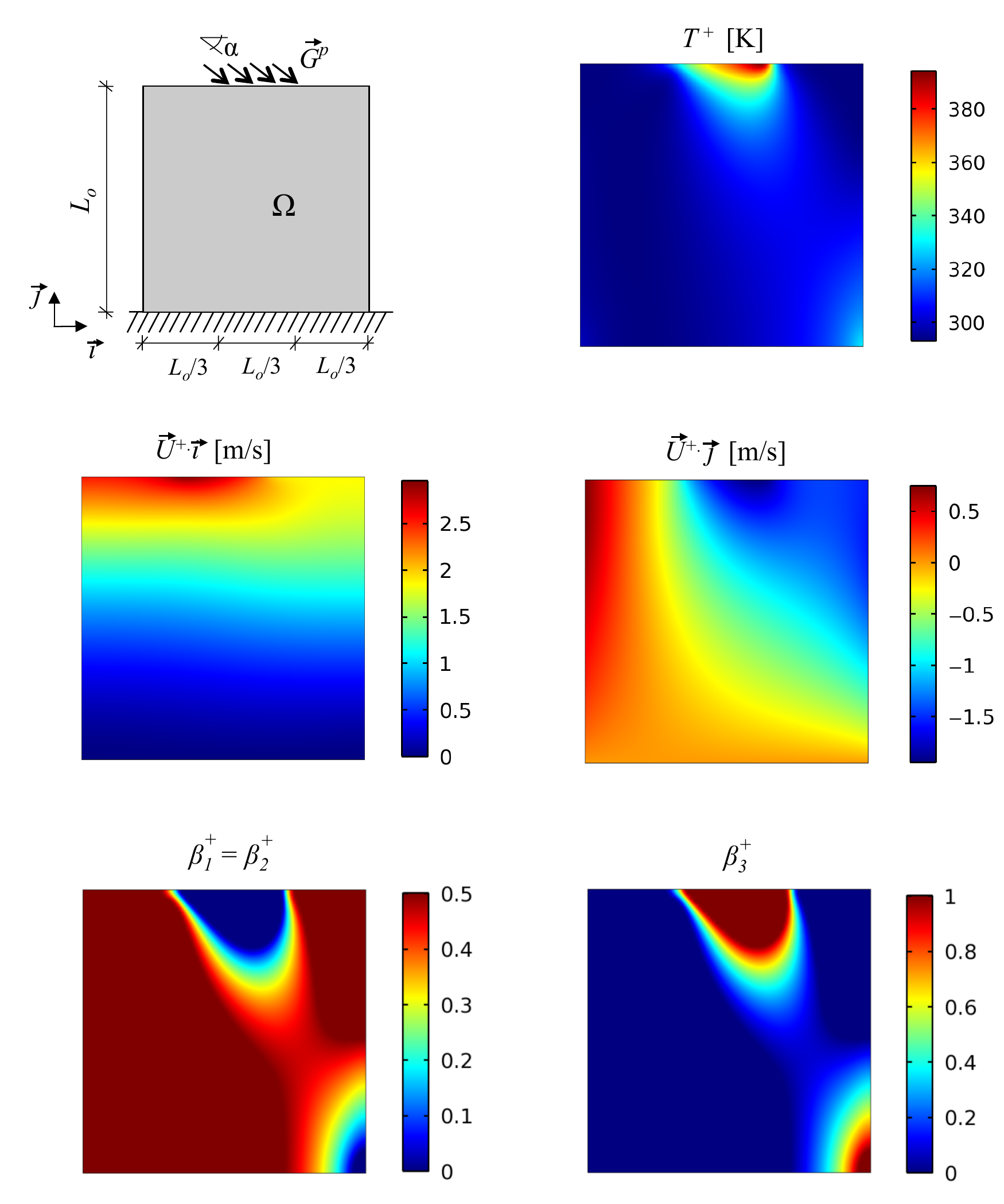}
\caption{A percussion is applied to a 2D solid: post-collision velocity $%
\vec{U}^+$, temperature $T^+$, and martensites/austenite volume fractions $%
\protect\beta_1^+$, $\protect\beta_2^+$, and $\protect\beta_3^+$.
Parameters: $k_v=1$ MPa $\cdot$ s, $c=5\cdot 10^{-2}l_a$, and $k=\protect%
\upsilon=0.5$ MPa, $\|\vec{G}^p\|=20$ MPa $\cdot$ s.}
\label{Figure1}
\end{figure}

\subsection{A Surface Percussion is Applied to a Solid}

We present numerical results for a two-dimensional SMA solid subjected to an
external percussion, \cite{mar2013}. As depicted in Fig. \ref{Figure1}, the
solid is squared in shape ($L_o= 1$ mm wide), fixed on the bottom face to an
immobile obstacle and free on the left and right sides. 

Before the collision, the solid is at rest ($U^-=0$) and at uniform 
temperature $T^- = T_{ext} = 0.9 T_o$, such that the alloy mixture results (for temperatures below the transformation temperature $T_o$, the martensitic phase is at equilibrium): $\beta_1^-=%
\beta_2^-=0.5$ and $\beta_3^-=0$. A percussion $\vec{G}^{p}$ inclined of an
angle $\alpha = 60^{\circ}$ with respect to the horizontal direction is
applied at the center of the top face on a segment with length $L_o/3$. (see
Fig. \ref{Figure1}).

\subsubsection{Velocity and Volume Fractions after the Collision}

As shown in Fig. \ref{Figure1}, a non null velocity field $\vec{U}^+$ is
obtained after the collision. The temperature $T^+$ mainly increases where
the percussion is applied and on the fixed constraint where a percussion
reaction originates (see Fig. \ref{Figure1}). In these regions, the
dissipated work is large, determining the increase of the temperature. In
turn, the latter is coupled with the appearance of the austenite phase $%
\beta_3^+$ (dominant at large temperature) and the disappearance of
martensites $\beta_1^+=\beta_2^+$ (see Fig. \ref{Figure1}).

\begin{figure}[tbp]
\centering
\includegraphics[width = 0.9\textwidth]{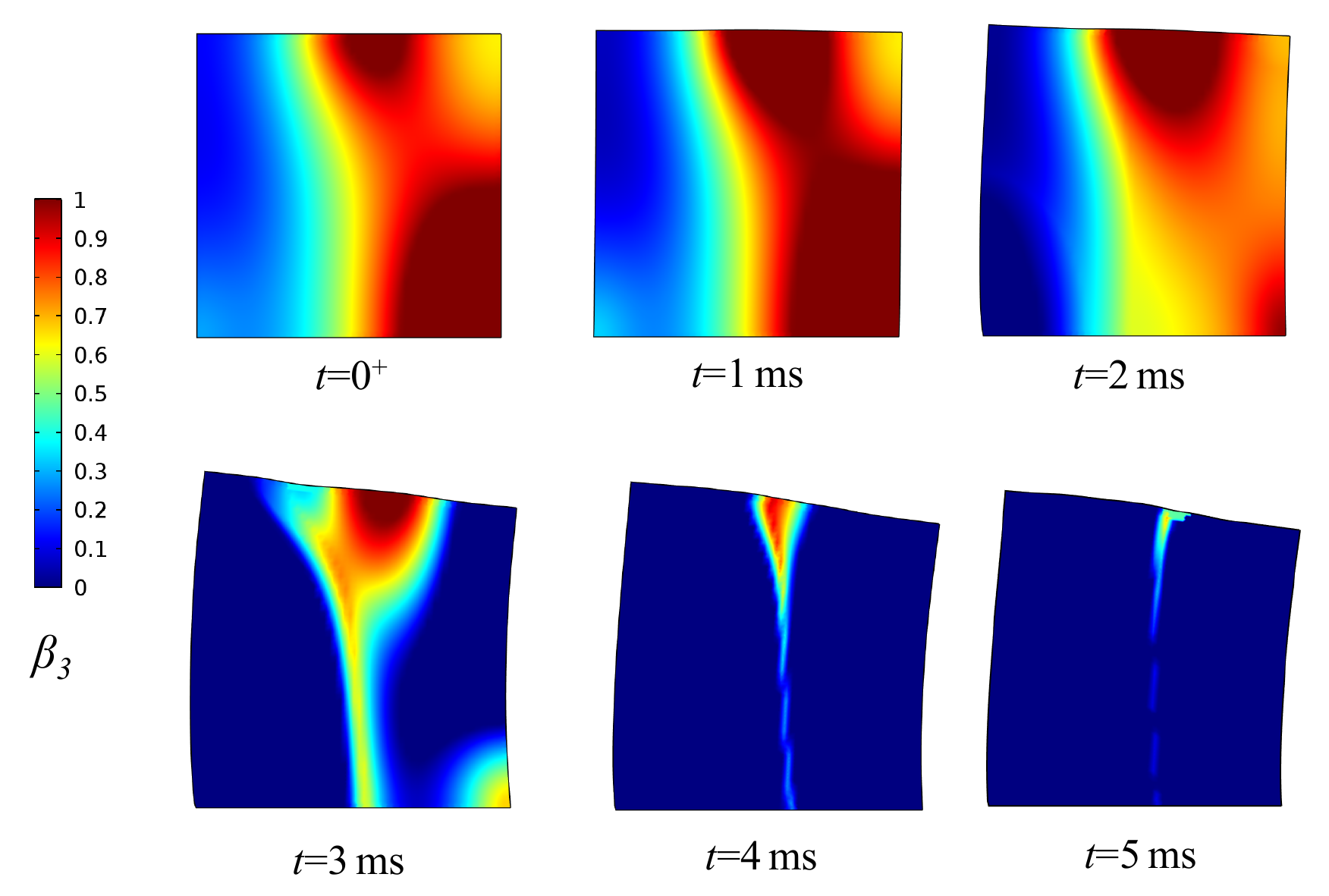}
\caption{A Percussion is Applied to a 2D solid: post-collision smooth
evolution. Austenite volume fraction $\protect\beta_3$ vs. time $t$ in the
current configuration. Parameters: $k_v=1$ MPa $\cdot$ s, $c=5\cdot
10^{-2}l_a$, and $k=\protect\upsilon=0.5$ MPa, $\|\vec{G}^p\|=35$ MPa $\cdot$
s.}
\label{Figure2}
\end{figure}

\subsubsection{Evolution following the Collision: Position and Austenite
Volume Fraction Depending on Time}

The evolution of the thermomechanical state of the structure after the
collision is shown in Fig. \ref{Figure2} where the time-evolution of
the austenite volume fraction $\beta_3$ in the current configuration of the
solid is reported. 

The predicted response is affected by both collision-induced and
stress-induced transformation mechanisms. The former determines indeed a
non-uniform alloy mixture at the beginning of the
post-collision evolution (i.e., $\vec{\beta}^+$). On the other hand,
stress-induced transformations are due to the deformation of the structure
induced by the post-collision velocity field (i.e., $\vec{U}^+$). In
particular, as shown in Fig. \ref{Figure2}, the solid globally rotates and
the collision-induced austenite progressively disappears with
time due to stress-induced phase change. Accordingly, martensites appear,
associated with material pseudoelastic response.

Since material properties after the collision depend on $\vec{\beta}^+$, the
post-collision structural deformation (associated with a non-trivial $\vec{U}%
^+$) is affected by collision-induced transformation mechanisms. Therefore,
collision-induced and stress-induced mechanisms are strongly coupled each
other in determining the post-collision evolution of the
structure, both in terms of deformation and alloy composition. Clearly,
austenite-martensite phase change mechanisms are also coupled with the
evolution of the temperature, whose initial condition after the collision is
given by the non-uniform field $T^+$.

\section*{Acknowledgments}

The financial support of the FP7-IDEAS-ERC-StG \#256872 (EntroPhase) and of
the project Fondazione Cariplo-Regione Lombardia MEGAsTAR ``Matematica
d'Eccellenza in biologia ed ingegneria come acceleratore di una nuova
strateGia per l'ATtRattivit\`a dell'ateneo pavese'' is 
gratefully acknowledged. The paper also benefited from the support of the
GNAMPA (Gruppo Nazionale per l'Analisi Matematica, la Probabilit\`a e le
loro Applicazioni) of INdAM (Istituto Nazionale di Alta Matematica) for ER.
The financial support of the State of Lower Saxony (Germany) in the
framework of the Masterplan ``Smart Biotecs'' is acknowledged by MM.

\end{document}